\newtheorem{thm}{Theorem}[section]
\newtheorem{cor}[thm]{Corollary}
\newtheorem{lem}[thm]{Lemma}
\newtheorem{prop}[thm]{Proposition}
\theoremstyle{definition}
\theoremstyle{question}
\newtheorem{que}[thm]{Question}
\numberwithin{equation}{section}
\begin{document}

\title[non-nilpotent graph of a group]{non-nilpotent graph of a group}%
\author{Alireza Abdollahi and Mohammad Zarrin}%
\address{Department of Mathematics, University of Isfahan, Isfahan
 81746-73441, Iran}%
 \email{a.abdollahi@math.ui.ac.ir,  zarrin1380@yahoo.com}
 %\thanks{}%
\subjclass[2000]{20D60;05C25}%
\keywords{Graphs associated to groups; nilpotent group; hypercenter of a group; regular graph}%
%\date{}%
%\dedicatory{}%
%\commby{}%
% ----------------------------------------------------------------
\begin{abstract}We associate a graph $\mathcal{N}_{G}$ with a  group
 $G$ (called the non-nilpotent graph of $G$)
as follows: take $G$ as the vertex set  and   two  vertices are adjacent   if they  generate a non-nilpotent subgroup. In this paper
we study the graph theoretical properties of $\mathcal{N}_{G}$ and
its induced subgraph  on $G\backslash nil(G)$, where $nil(G)=\{x\in
G \;|\; \langle x,y\rangle\ \text{is nilpotent for all}\; y\in
G\}$. For any finite group $G$, we prove that $\mathcal{N}_G$ has either $|Z^*(G)|$ or $|Z^*(G)|+1$ connected components,
 where $Z^*(G)$ is the hypercenter of $G$.
  We give a new characterization for finite nilpotent groups
in terms of the non-nilpotent graph. In fact we prove that a
finite group $G$ is nilpotent if and only if the set of vertex
degrees of $\mathcal{N}_G$ has at most two elements.
  \end{abstract}
\maketitle
% ----------------------------------------------------------------
\section{\textbf{Introduction and results}}
Study of algebraic structures by  graphs associated with them
gives rise to many recent and interesting results (see e.g.,
\cite{Ab1}, \cite{Ab2}). Here we associate a simple graph with
a group by the property of being nilpotent.  P. Erd\"os has introduced a graph associated with
a group by the property of commutativity: it is a graph whose vertex set is the set of elements
 of the group and two vertices are adjacent whenever they do not commute (see \cite{ne} and its
  review by  Erd\"os in Mathematical Review MR0419283 (54\#7306)).
Let $G$ be a group. We associate a simple graph $\mathcal{N}_G$
with $G$ \rm(called the non-nilpotent graph of $G$\rm) as follows:
take $G$ as the vertex set and join two vertices by an edge if they do
not generate a nilpotent subgroup. Our aim is to study a group $G$
by the information stored in its non-nilpotent graph and to study how the graph theoretical
properties  of $\mathcal{N}_G$  effect on the group ones of $G$.
Note that if
$G$ is weakly nilpotent (i.e., every two generated subgroup of
$G$ is nilpotent),  $\mathcal{N}_G$ has no edge. It follows
that the non-nilpotent graphs of weakly nilpotent groups with the
same cardinality are isomorphic. So we must be  interested
in non weakly nilpotent groups. On the other hand, as there are
  vertices in $\mathcal{N}_G$ which are isolated
(e.g., elements in the hypercenter of $G$),  it is wise to
study the subgraph $\mathfrak{N}_G$ of $\mathcal{N}_G$ induced
by $G\backslash nil(G)$, where $$nil(G)=\{x\in G \;|\; \langle
x,y\rangle \;\text{is nilpotent for all}\; y\in G\}.$$ It is not
known whether the subset $nil(G)$ is a subgroup of $G$, but in
many important cases it is a
subgroup. In particular, $nil(G)$ is equal to the hypercenter $Z^*(G)$ of $G$ whenever $G$ satisfies the maximal condition
on its subgroups or $G$ is a finitely generated solvable group (see Proposition \ref{l0}, below).

 The set of  adjacent vertices to a vertex $v$ in a graph $\Gamma$ is denoted by $N_{\Gamma}(v)$. For
  an element $x$ of a group $G$, the set $N_{\overline{\mathcal{N}_G}}(x)\cup \{x\}$ will be denoted by
   $nil_G(x)$, where $\overline{\mathcal{N}_G}$ is the complement of the non-nilpotent graph of $G$. In
   fact $$nil_G(x)=\{g\in G \;|\;\langle x, g \rangle \;\; \text{is nilpotent} \}.$$ We call $nil_G(x)$ the
    {\sl nilpotentizer} of $x$ in $G$.  In Section $2$ we show some results on the  nilpotentizers.\\

In Section $3$ we give some properties   of
$\mathfrak{n}$-groups, where by an $\mathfrak{n}$-group we mean a group $G$ in which  $nil_G(x)$ is a subgroup  for every $x\in G$. We
also study some possible relations between simple $\mathfrak{n}$-group and  some
classes of groups (see Theorem \ref{tt}, below).

A set $C$ of vertices of a graph $\Gamma$ whose induced subgraph
is a complete subgraph is called  a clique and  the maximum size
(if it exists) of a clique in a graph is called the clique number
of the graph and it is denoted by $\omega(\Gamma)$.
 In Section $4$  we state some previously proven results about groups whose
non-nilpotent graphs have finite clique number. We prove some results on the girth and diameter of the non-nilpotent
 graph also we show that there is no finite  non-nilpotent  group $G$ such that $\mathfrak{N}_G\cong \mathfrak{N}_H$
  or $\mathfrak{N}_{G/N}$ for some proper subgroup $H$ or quotient $G/N$ of $G$.\\

In Section $5$ we shortly prove the connectedness of $\mathfrak{N}_G$ for finite non-nilpotent groups $G$. Also, in
 this case, we estimate the diameter of $\mathfrak{N}_G$ to be at most $6$ while we conjecture that the best bound
 for the diameter must be $2$.\\

A planar graph is a graph that can be embedded in the plane so
that no two edges intersect geometrically except at a vertex which
both are incident. In Section 6  we show that for a non-nilpotent finite group $G$, $\mathfrak{N}_{G}$ is planer if and only if $G\cong S_3$.\\

The degree of a vertex $v$ in a graph $\Gamma$ is the number of vertices which are adjacent to $v$ in $\Gamma$.  A graph $\Gamma$ is said to have
$m$ kind vertex degrees if the
set of vertex degrees of $\Gamma$ is of size $m$. So by definition,  a graph is regular if and only if it has 1 kind vertex degree. In Section 7
we prove that a finite group $G$ is nilpotent if and only if $\mathcal{N}_G$ has at most two kind vertex degrees. This shows that there is no
finite non-nilpotent group $G$ such that  $\mathfrak{N}_G$ is regular.
\section{\textbf{Some properties of nilpotentizer }}
Let $G$ be a group and $x$ be an element of $G$.
The set of vertices of the complement graph $\overline{\mathcal{N}_G}$ of the non-nilpotent graph $G$ which are adjacent to $x$ together with
$x$ itself consist a set which we call it the {\em nilpotentizer} of $x$ in $G$. It will be denoted by $nil_G(x)$.
Indeed
$$nil_G(x)=\{g\in G\mid\langle x,g\rangle ~~\textrm{is nilpotent} \}$$
  For a nonempty subset $S$ of $G$, we define the
nilpotentizer of $S$ in $G$ as
$$nil_G(S)=\bigcap_{x\in S} nil_G(x).$$  We call $nil_G(G)$ the
nilpotentizer of $G$, and it will be  denoted  by $nil(G)$. Thus
$$nil(G)=\{x\in G \mid\langle x, y \rangle ~~\textrm{is nilpotent for all}\; y\in G\}.$$
 Besides nilpotentizers of elements in a group which are not necessarily subgroups even in finite solvable groups (see Lemma \ref{s4}, below),
  there are vast classes of groups, in which   nilpotentizers of groups are subgroups. In Section 3, we will study $\mathfrak{n}$-groups, that
   is, groups in which the nilpotentizer of each element is a subgroup.
\begin{prop}\label{l0}
Let  $G$ be  any group. Then
\begin{enumerate}
\item $Z^{*}(G)\subseteq nil(G) \subseteq R(G)$, where $R(G)$ is the set of right Engel elements of $G$.
\item If $G$ satisfies the maximal condition on its subgroups or $G$ is finitely generated solvable group
then $Z^{*}(G)=nil(G)=R(G)$.
\end{enumerate}
\end{prop}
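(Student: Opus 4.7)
The plan is to establish the two inclusions of part (1) from first principles, then read off part (2) by invoking the classical description of right Engel elements due to Baer (for the maximal condition) and to Gruenberg (for finitely generated soluble groups). Since both of those theorems assert $R(G) \subseteq Z^{*}(G)$ in the stated situations, combining them with part (1) collapses the chain $Z^{*}(G) \subseteq nil(G) \subseteq R(G)$ to equalities, which is exactly part (2).

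For the inclusion $Z^{*}(G)\subseteq nil(G)$, I would fix $x\in Z^{*}(G)$ and an arbitrary $y\in G$, and set $H=\langle x,y\rangle$. The first step is a transfinite induction on the ordinal $\alpha$ showing that $Z_{\alpha}(G)\cap K\subseteq Z_{\alpha}(K)$ for every subgroup $K$ of $G$: at a successor $\alpha=\beta+1$, any $h\in Z_{\alpha}(G)\cap K$ satisfies $[h,K]\subseteq Z_{\beta}(G)\cap K\subseteq Z_{\beta}(K)$ by the inductive hypothesis, so $h\in Z_{\alpha}(K)$; at a limit ordinal the claim follows by taking unions. Specialising to $K=H$ gives $x\in Z^{*}(H)$, so $H/Z^{*}(H)$ is generated by the image of $y$ alone and is therefore cyclic, hence abelian. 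Because $Z^{*}(H)$ is by definition the stable value of the upper central series of $H$, the quotient $H/Z^{*}(H)$ has trivial centre, and combining this with the fact that it is abelian forces it to be trivial; that is, $H=Z^{*}(H)$. Thus $H$ is a finitely generated hypercentral group, and by the classical theorem that every finitely generated hypercentral group is nilpotent, $\langle x,y\rangle$ is nilpotent and $x\in nil(G)$.

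The inclusion $nil(G)\subseteq R(G)$ is then straightforward: if $x\in nil(G)$ and $y\in G$, then $\langle x,y\rangle$ is nilpotent of some class $c$, and in any nilpotent group of class $c$ every element is right $c$-Engel, so $[y,x,x,\ldots,x]=1$ ($c$ copies of $x$) already inside $\langle x,y\rangle$ and \emph{a fortiori} in $G$. The main obstacle I anticipate is the transfinite step producing $x\in Z^{*}(H)$ together with the appeal to the nontrivial fact that a finitely generated hypercentral group is nilpotent; once these are in place, the remaining arguments, including the identification $R(G)=Z^{*}(G)$ in part (2) via the theorems of Baer and Gruenberg, are essentially citations.
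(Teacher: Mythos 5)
Your proposal is correct and takes essentially the same route as the paper, which simply declares part (1) straightforward and proves part (2) by citation: your transfinite induction giving $Z_{\alpha}(G)\cap K\subseteq Z_{\alpha}(K)$, followed by ``finitely generated hypercentral implies nilpotent,'' is precisely the omitted straightforward verification, and your part (2) is the same appeal to the classical identifications of $R(G)$ with $Z^{*}(G)$ (Robinson's Theorem 12.3.7, i.e.\ Baer, for the maximal condition; for finitely generated soluble groups the relevant theorem is due to Brookes, which is what the paper cites, rather than Gruenberg). One small slip worth fixing: to certify $x\in R(G)$ you should quote $[x,y,\dots,y]=1$ (with $c$ copies of $y$), since $[y,x,\dots,x]=1$ is the left Engel condition for $x$; both identities do hold in a nilpotent group of class $c$, so the argument is unaffected.
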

\begin{proof}
(1) \; It is straightforward.\\
(2) \; It follows from \cite[Theorem 12.3.7]{d.j.r} and the main result of \cite{Br}.
\end{proof}
\begin{lem}\label{ll1}
Let $G$ be any group,  $N$  a normal
subgroup of $G$ and $x,g\in G$. Then
\begin{enumerate}
\item $nil(G)\subseteq nil_G(x)$. \item
$\frac{nil_G(x)N}{N}\subseteq nil_{\frac{G}{N}}(xN)$.
{\rm(}obviously here $\frac{nil_G(x)N}{N}:=\{yN \;|\; y\in
nil_G(x)N\}${\rm)}. \item $nil_{\frac{G}{K}}(xK)=
\frac{nil_G(x)}{K}$, where $K$ is a normal subgroup of $G$ with
$K\leq Z^*(G)$. \item $G$ is an  $\mathfrak{n}$-group if and only
if $\frac{G}{K}$ is an $\mathfrak{n}$-group for some normal
subgroup $K$ of $G$ with $K\leq Z^*(G)$. \item If $\langle
x\rangle=\langle y\rangle$ for some $y\in G$, then
$nil_G(x)=nil_G(y)$. \item  $nil_G(x)^g=nil_G(x^g)$.
\end{enumerate}
\end{lem}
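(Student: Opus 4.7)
The plan is to handle parts (1), (2), (5), (6) by direct verification, concentrate the work on part (3), and deduce (4) from it with almost no extra effort.

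The four easy parts are essentially formal. For (1), any $z\in nil(G)$ makes $\langle z,x\rangle$ nilpotent by the definition of $nil(G)$, so $z\in nil_G(x)$. For (5), the two groups $\langle x,g\rangle$ and $\langle y,g\rangle$ coincide whenever $\langle x\rangle=\langle y\rangle$, so the nilpotentizers agree elementwise. For (6), conjugation by $g$ is an isomorphism of $G$ that preserves nilpotency, giving $z\in nil_G(x)^g\Leftrightarrow \langle x,z^{g^{-1}}\rangle$ nilpotent $\Leftrightarrow \langle x^g,z\rangle$ nilpotent $\Leftrightarrow z\in nil_G(x^g)$. For (2), write $y=y_0n$ with $y_0\in nil_G(x)$ and $n\in N$; then $\langle xN,yN\rangle=\langle x,y_0\rangle N/N$ is a quotient of the nilpotent group $\langle x,y_0\rangle$, hence itself nilpotent.

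The core of the lemma is (3). The inclusion $nil_G(x)/K\subseteq nil_{G/K}(xK)$ is immediate from (2), using that $K\leq Z^*(G)\leq nil(G)\subseteq nil_G(x)$ by (1) and Proposition~\ref{l0} to make the quotient well defined. For the reverse inclusion, suppose $yK\in nil_{G/K}(xK)$ and set $H=\langle x,y\rangle$; the goal is to show $H$ is nilpotent. Since $H/(H\cap K)\cong HK/K=\langle xK,yK\rangle$ is nilpotent of some class $c$, we have $\gamma_{c+1}(H)\leq H\cap K\leq Z^*(G)$. I would now prove $H\cap Z_\alpha(G)\leq Z_\alpha(H)$ by transfinite induction on $\alpha$: the base case $\alpha=0$ is trivial, limit cases are formal unions, and at a successor $\alpha+1$, an element $z\in H\cap Z_{\alpha+1}(G)$ satisfies $[z,h]\in H\cap Z_\alpha(G)\leq Z_\alpha(H)$ for all $h\in H$, putting $z$ into $Z_{\alpha+1}(H)$. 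This yields $\gamma_{c+1}(H)\leq Z^*(H)$, so $H$ is hypercentral-by-nilpotent, hence hypercentral; and since $H$ is finitely generated (being two-generated), the classical result that finitely generated hypercentral groups are nilpotent (they are locally nilpotent) finishes the argument.

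Finally, (4) is a formal consequence of (3) plus the subgroup correspondence. By (3), $nil_{G/K}(xK)=nil_G(x)/K$ for every $x\in G$, and the correspondence theorem says $nil_G(x)$ is a subgroup of $G$ iff $nil_G(x)/K$ is a subgroup of $G/K$; ranging $x$ over $G$ covers all cosets of $K$ and gives the equivalence. The main obstacle will be the nilpotency step in part (3): the transfinite character of $Z^*(G)$ means that $\gamma_{c+1}(H)$ sits inside $Z^*(H)$ but not a priori inside any finite term, and pulling the argument back to a finite nilpotency class for $H$ is exactly what finite generation of $H$ buys us.
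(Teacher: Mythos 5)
Your proof is correct, and there is nothing in the paper to compare it against: the authors dismiss this lemma with ``The proof is straightforward,'' so your write-up supplies exactly the content they leave implicit. Parts (1), (2), (5), (6) are indeed formal, and your treatment of (3) identifies the only real issue in the lemma, namely that for infinite $G$ the hypercenter is transfinite: the induction $H\cap Z_\alpha(G)\leq Z_\alpha(H)$, the reduction $\gamma_{c+1}(H)\leq Z^*(H)$, and the appeal to the fact that a finitely generated hypercentral group is nilpotent (here $H=\langle x,y\rangle$ is $2$-generated) is a clean and complete route, and (4) then follows from (3) by the correspondence theorem exactly as you say. One wording caution: the clause ``so $H$ is hypercentral-by-nilpotent, hence hypercentral'' invokes a false general principle --- the infinite dihedral group is abelian-by-(cyclic of order $2$), hence hypercentral-by-nilpotent, but has trivial centre and is not hypercentral. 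What you actually established is stronger and does suffice: since $\gamma_{c+1}(H)\leq Z^*(H)$, the quotient $H/Z^*(H)$ is nilpotent, and by definition of the hypercenter it has trivial centre, so it is trivial and $H=Z^*(H)$ is hypercentral; with that one-line replacement the argument is airtight.
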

\begin{proof}
The proof is straightforward.
\end{proof}
  \begin{lem}\label{l9}
Let $G$ be a finite group and $a$ be an element of
$G$. Then  $|nil_G(a)|$ is divisible by $|a|$.
\end{lem}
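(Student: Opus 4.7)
The plan is to show that $nil_G(a)$ is a union of right cosets of the cyclic subgroup $\langle a\rangle$; the divisibility $|a|\mid |nil_G(a)|$ then follows immediately, since $|\langle a\rangle|=|a|$ and distinct right cosets are disjoint and of the same size.

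First, I would note that $\langle a\rangle\subseteq nil_G(a)$: for any power $a^k$ the group $\langle a,a^k\rangle=\langle a\rangle$ is cyclic, hence nilpotent. The core step is then to verify that $nil_G(a)$ is stable under right multiplication by elements of $\langle a\rangle$, i.e.\ that $g\in nil_G(a)$ implies $ga^k\in nil_G(a)$ for every integer $k$. This amounts to showing that $\langle a,ga^k\rangle$ is nilpotent, and the crucial observation is the equality
\[
\langle a,ga^k\rangle=\langle a,g\rangle,
\]
which holds because any subgroup containing $a$ also contains $a^{-k}$, so from $a$ and $ga^k$ one recovers $g=(ga^k)a^{-k}$, while the reverse inclusion is obvious. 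Since $g\in nil_G(a)$ the right-hand group is nilpotent, hence so is the left-hand group, giving $ga^k\in nil_G(a)$.

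With this closure property in hand, $nil_G(a)$ decomposes as a disjoint union of right cosets $g\langle a\rangle$ with $g\in nil_G(a)$, each of size $|a|$. Consequently $|a|$ divides $|nil_G(a)|$.

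There is no real technical obstacle. The only mildly subtle point, and the reason a little care is needed, is that $nil_G(a)$ is not known to be a subgroup of $G$ in general (this is precisely the defining issue for the $\mathfrak{n}$-groups introduced in Section 3), so one cannot simply invoke Lagrange's theorem; the argument works because the weaker property of being a union of $\langle a\rangle$-cosets is already enough for the divisibility conclusion.
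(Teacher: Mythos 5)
Your proof is correct and rests on the same idea as the paper's: showing that $nil_G(a)$ is a union of cosets of $\langle a\rangle$, so that $|a|$ divides its cardinality even though $nil_G(a)$ need not be a subgroup. The only (inessential) difference is how the coset-union property is verified: you check closure under multiplication by powers of $a$ directly via $\langle a, ga^k\rangle=\langle a,g\rangle$, while the paper writes $nil_G(a)$ as the union of the maximal nilpotent subgroups containing $a$, each of which is a union of $\langle a\rangle$-cosets by Lagrange.
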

\begin{proof}
 It is easy to see that $nil_G(a)$ is the union of  maximal
nilpotent subgroups of $G$  containing $a$. On the other hand,
each of these subgroups is a union of cosets of $\langle
a\rangle$. It follows that
the order of  $a$ divides  $|nil_G(a)|$.
 \end{proof}
 \section{\textbf{Groups in which nilpotentizers of elements are subgroups}}
 We  call a group $G$ is an $\mathfrak{n}$-group if
$nil_G(x)$ is a subgroup of $G$ for every $x\in G$. In this section we study $\mathfrak{n}$-groups.
Following \cite{c} a finite group $G$ is called  an sn-group whenever $$S_G(x)=\{g\in G \;|\; \langle
x\rangle \;\;\textrm{is a subnormal subgroup of}\;\; \langle x, g\rangle \}$$ is a subgroup of $G$ for all $x\in G$ and also $G$ is called an sn($p$)-group (for some prime $p$) if
$S_G(x)\leq G$ for all $p$-elements $x\in G$. For example, the
dihedral group $D_{2n}$,  is not only an
$\mathfrak{n}$-group but also an sn-group.
 There is another related class of groups, called $E$-groups: A group $G$ is called an  $E$-group
 if $$E_G(x)=\{g\in G \;|\; [g, _nx]=1 \;\;\text{for some positive integer}\;\; n\}$$ is a subgroup of $G$ for every
$x\in G$;  and for a prime $p$,   $G$ is called an $E_p$-group if $E_G(x)\leq G$ for
all $p$-elements $x\in G$. These class of groups was defined and studied by Peng \cite{Pe}.
Finite sn-groups has been studied by  Casolo \cite{c} as well as Heineken \cite{He}. Note that $nil_G(x)\subseteq S_G(x) \subseteq E_G(x)$ for any group $G$ and $x\in G$. The relation between these two latter classes of groups is established in the following result.
\begin{thm} \rm(\cite{c}, Theorem 1.11\rm) \label{t2}
Let $G$ be a finite group. The following are equivalent:
\begin{enumerate} \item $G$
is an sn($p$)-group for every $p$ dividing
$|G|$.
\item   $G$ is an sn-group;
\item   G is an $E$-group;
\item  $G$ is an $E_p$-group for every $p$ dividing $|G|$.
\end{enumerate}
\end{thm}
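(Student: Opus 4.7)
The plan is to establish the four-way equivalence via the cycle $(2) \Rightarrow (1) \Rightarrow (4) \Rightarrow (3) \Rightarrow (2)$. The implications $(2) \Rightarrow (1)$ and $(3) \Rightarrow (4)$ are immediate, as each $p$-restricted condition is a weakening of its unrestricted counterpart to the $p$-elements. The genuine content lies in $(1) \Leftrightarrow (4)$, which reduces to a local identity between $S_G(x)$ and $E_G(x)$ for $p$-elements $x$, and in $(4) \Rightarrow (3)$, which is a primary-decomposition reduction; the remaining implication $(3) \Rightarrow (2)$ then closes the cycle by combination.

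The decisive local fact is the identity $S_G(x) = E_G(x)$ for a $p$-element $x$ in a finite group. The containment $S_G(x) \subseteq E_G(x)$ is already recorded in the text. For the reverse, given $g$ with $[g,_n x] = 1$, I would work inside the finite group $H = \langle x, g\rangle$ and invoke the subnormality/Engel machinery available for $p$-elements in finite groups: Baer's theorem that left Engel elements of a finite group lie in the Fitting subgroup, together with Kegel/Wielandt-type criteria that promote a single Engel relation to subnormality when the Engel element is a $p$-element, to conclude that $\langle x\rangle$ is subnormal in $H$ via inclusion in the nilpotent subgroup $O_p(H)$. Given this identity, the defining sets of conditions (1) and (4) coincide on the $p$-elements, and the equivalence is tautological.

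For the passage from $p$-elements to arbitrary elements, namely $(4) \Rightarrow (3)$, I would exploit the primary decomposition $x = \prod_p x_p$ in a finite group, where the $p$-parts $x_p$ are pairwise commuting prime-power-order powers of $x$. The target identity is $E_G(x) = \bigcap_p E_G(x_p)$: the inclusion $\subseteq$ is immediate since each $x_p$ is a power of $x$, while $\supseteq$ uses the pairwise commutation of the $x_p$ to splice individual Engel relations into a single Engel relation for $x$. Since an intersection of subgroups is a subgroup, (3) follows from (4). The remaining arrow $(3) \Rightarrow (2)$ then follows by composition through the already-established steps, and the analogous identity $S_G(x) = \bigcap_p S_G(x_p)$ (proved by the same device) gives an independent direct route from (1) to (2).

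The principal obstacle I expect is the $p$-element identity $S_G(x) = E_G(x)$: this is a genuine subnormality/Engel correspondence resting on Baer-type input rather than a formal manipulation, and without the $p$-element restriction a single Engel relation need not yield subnormality of $\langle x\rangle$ in $\langle x, g\rangle$. Once this local step is secured, the primary-decomposition reductions are standard bookkeeping.
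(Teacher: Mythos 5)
You should first note that the paper offers no proof of this statement at all: it is quoted from Casolo's article (\cite{c}, Theorem 1.11), and Casolo's proof is a global argument using the structure theory of finite $E$-groups and sn-groups (Peng, Heineken, subnormality theory), not the pointwise identifications you propose. The decisive step in your sketch --- the identity $S_G(x)=E_G(x)$ for every $p$-element $x$ of every finite group --- is exactly where the difficulty lies, and your justification does not carry it. Baer's theorem says that the left Engel elements of a finite group lie in the Fitting subgroup, but being left Engel means $[g,{}_{n}x]=1$ for \emph{all} $g$ in the group; from the single relation $[g,{}_{n}x]=1$ inside $H=\langle x,g\rangle$ you get no license to place $x$ in $F(H)$ or in $O_p(H)$, and there is no standard Kegel/Wielandt-type criterion that promotes one Engel relation against one generator to subnormality of $\langle x\rangle$ in $\langle x,g\rangle$, even for $p$-elements. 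Observe also that if such a pointwise identity were available as a black box, conditions (1) and (4) of the theorem would coincide by definition and the cited result would be largely vacuous; the whole point of Casolo's theorem is that only the \emph{global} properties (all $S_G(x)$, resp.\ all $E_G(x)$, being subgroups) are equivalent, while in general one only has the containment $S_G(x)\subseteq E_G(x)$. So the heart of your argument is an unproved assertion, not a quotable fact.

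The ``bookkeeping'' reductions are also not formal. The inclusion $E_G(x)\subseteq E_G(x_p)$ is not immediate: an Engel relation $[g,{}_{n}x]=1$ does not obviously pass to powers of $x$ (this is unlike centralizing, where $[g,x]=1$ gives $[g,x^k]=1$); conversely, splicing the relations $[g,{}_{n_p}x_p]=1$ for the commuting primary parts into a single relation $[g,{}_{m}x]=1$ would require knowing how the iterated commutators with $x_p$ interact with the other primary components, which you have not controlled (compare the delicate commutator bookkeeping the paper itself needs in Lemma 3.4 for the analogous statement about nilpotentizers). The same issue affects the direction $\bigcap_p S_G(x_p)\subseteq S_G(x)$: subnormality of the $\langle x_p\rangle$ in the various groups $\langle x_p,g\rangle$ does not formally yield subnormality of $\langle x\rangle$ in $\langle x,g\rangle$. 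As it stands, every genuinely nontrivial arrow of the four-way equivalence rests on an unestablished step, so the proposal does not constitute a proof; for this statement the honest course is the one the paper takes, namely citing Casolo.
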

The non-abelian finite simple sn-groups are classified.
\begin{thm}\rm(\cite{c}, Theorem 2.1\rm)\label{t3}
A non-abelian finite simple group is an sn-group if and only if it is
one of the following groups: $$\mathrm{PSL}(2, 2^n), \hspace{3mm}n\geq
2;\hspace{5mm} \mathrm{Sz}(2^{2m+1}),~~~~
  m\geq 1$$
\end{thm}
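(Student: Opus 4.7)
The plan is to use the equivalence furnished by Theorem \ref{t2}: a finite group $G$ is an sn-group if and only if it is an $E$-group, so it suffices to prove that the non-abelian finite simple $E$-groups are exactly the $\mathrm{PSL}(2,2^n)$ with $n\ge 2$ and the Suzuki groups $\mathrm{Sz}(2^{2m+1})$ with $m\ge 1$.

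For the ``if'' direction I would verify the $E$-group property directly, exploiting the well-known subgroup structure of these two families. Both $\mathrm{PSL}(2,2^n)$ and $\mathrm{Sz}(q)$ are Zassenhaus groups: each Sylow $2$-subgroup $P$ is a TI-subgroup (distinct conjugates meet trivially), every element of odd order lies in a unique conjugate of a cyclic (resp.\ abelian) Hall subgroup, and those Hall subgroups are also TI. Given $x\in G$, the set $E_G(x)$ is a union of maximal nilpotent subgroups containing $x$, and the TI property forces a \emph{unique} maximal nilpotent subgroup to contain $x$ (namely the one inside the unique maximal subgroup of $G$ normalizing the primary component of $\langle x\rangle$), so $E_G(x)$ is itself a subgroup.

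For the converse, let $G$ be a non-abelian finite simple sn-group. The key step is to show that for every prime $p$ dividing $|G|$ and every $p$-element $x\in G$, the centralizer $C_G(x)$ is a $p$-group. The strategy is: if a prime $q\neq p$ divided $|C_G(x)|$, pick a $q$-element $y$ commuting with $x$; then $y\in E_G(x)$, so $E_G(x)$ contains elements whose order is coprime to $p$, and combining this with the hypothesis that $E_G(x)$ is a subgroup, together with conjugation by elements of $N_G(\langle x\rangle)$, produces a proper subgroup structure controlling a $p$-local of $G$ that contradicts simplicity. Consequently $G$ is a CN-group in Suzuki's sense. One then applies Suzuki's classification of simple CN-groups, which yields $\mathrm{PSL}(2,q)$ for appropriate $q$, $\mathrm{PSL}(3,4)$, and $\mathrm{Sz}(q)$, and eliminates $\mathrm{PSL}(3,4)$ together with those $\mathrm{PSL}(2,q)$ having $q$ odd by producing an explicit element $x$ for which two distinct maximal nilpotent subgroups contain $x$ and whose union fails to be a subgroup.

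The main obstacle is the converse direction. Proving that the $E$-group property forces centralizers of prime-power elements to be $p$-groups is the critical technical step, and discarding the spurious entries in Suzuki's CN-list requires close examination of the lattice of maximal nilpotent subgroups in each candidate. Both steps rest on substantial structural information about the simple groups involved and, through Suzuki's theorem, ultimately on the Feit--Thompson odd order theorem; for this reason I would not attempt a self-contained proof and would instead cite Casolo's original argument.
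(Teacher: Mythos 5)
The paper offers no proof of this statement at all: it is quoted verbatim from Casolo \cite[Theorem 2.1]{c}, so your closing decision to cite Casolo's original argument is exactly what the paper does, and the ``if'' part of your sketch (partition of $\mathrm{PSL}(2,2^n)$ and $\mathrm{Sz}(q)$ into TI nilpotent Hall subgroups, so that the relevant subnormalizer-type sets coincide with the unique partition component through $x$) is consistent with how the present paper itself later exploits these groups (compare Theorem \ref{tt} and Lemma \ref{lr}).

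However, the key step you propose for the converse is false as stated. You claim that in a non-abelian finite simple sn-group the centralizer $C_G(x)$ of every $p$-element $x$ must be a $p$-group; this would force every element of $G$ to have prime power order, and that conclusion is contradicted by the theorem's own list. For instance $\mathrm{PSL}(2,16)$ is an sn-group by the statement, yet it contains a cyclic maximal torus of order $15$, so the centralizer of an element of order $3$ contains elements of order $5$; similarly $\mathrm{Sz}(128)$ has cyclic subgroups of order $145$. So no argument can establish that step, and your subsequent sentence ``consequently $G$ is a CN-group'' does not follow from it (nor is it implied by it in the intended direction). What one can plausibly aim for is the weaker statement that all centralizers of nontrivial elements are nilpotent (the CN condition), which the listed groups do satisfy because their tori are cyclic and their Sylow $2$-subgroups are nilpotent; only then would Suzuki's classification of simple CN-groups become available, after which $\mathrm{PSL}(3,4)$ and the $\mathrm{PSL}(2,q)$ with $q$ odd must be excluded by exhibiting failures of the sn-property. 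Since you ultimately defer to Casolo anyway, the citation carries the proof, but the intermediate reduction as you wrote it would not survive scrutiny and should be corrected or deleted.
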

Besides $nil_G(x) \subseteq S_G(x)$ for any element $x$ of a group
$G$, there are sn-groups which are not $\mathfrak{n}$-groups.
\begin{lem}\label{s4}
The symmetric group $S_4$ of degree $4$ is an $\mathrm{sn}$-group which  is not
 an $\mathfrak{n}$-group.
 \end{lem}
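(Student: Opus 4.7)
The proof consists of two independent parts.

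First I would establish that $S_4$ is not an $\mathfrak{n}$-group by exhibiting a bad element. Take $x=(1\,2)(3\,4)$; I claim $nil_{S_4}(x)$ equals the union of the three Sylow $2$-subgroups of $S_4$, which by inclusion--exclusion has cardinality $3\cdot 8-3\cdot 4+4=16$. Since $16\nmid 24=|S_4|$, Lagrange's theorem immediately rules out $nil_{S_4}(x)$ being a subgroup. Two ingredients feed into this. (i) The three Sylow $2$-subgroups of $S_4$ are the copies of $D_8$ stabilising the three partitions of $\{1,2,3,4\}$ into two pairs, and $x$ lies in their common intersection $V_4=O_2(S_4)$; so for any $2$-element $g\in S_4$ the subgroup $\langle x,g\rangle$ is contained in a Sylow $2$-subgroup, hence is nilpotent. (ii) No $3$-cycle $g$ belongs to $nil_{S_4}(x)$: otherwise the nilpotent $\{2,3\}$-group $\langle x,g\rangle$ would split as a direct product of its Sylow subgroups, forcing $x$ and $g$ to commute; but $C_{S_4}(x)=D_8$ contains no element of order $3$.

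Next I would show $S_4$ is an sn-group. By Theorem~\ref{t2} it suffices to verify that $S_4$ is an $E_p$-group for $p=2,3$; more directly I would verify that $S_{S_4}(x)$ is a subgroup for one representative of each conjugacy class. The key preliminary is to list the subnormal subgroups of $S_4$: these are $\{e\}$, the three order-$2$ subgroups of the normal $V_4$, $V_4$ itself, $A_4$, and $S_4$. From this one reads off: if $x$ is a double transposition, $\langle x\rangle$ is subnormal in every overgroup inside $S_4$, so $S_{S_4}(x)=S_4$; if $x$ is a transposition or a $4$-cycle, $\langle x\rangle$ is subnormal in a subgroup $H$ iff $H$ lies in the unique Sylow $2$-subgroup containing $x$, so $S_{S_4}(x)=D_8$; if $x$ is a $3$-cycle, $\langle x\rangle$ is subnormal only inside the copy of $S_3$ fixing the point outside the support of $x$, so $S_{S_4}(x)=S_3$. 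Each of these is a subgroup.

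The main obstacle is the case analysis in the second part: one must systematically check, for each conjugacy class of $x$ and each overgroup $H$ of $\langle x\rangle$ in $S_4$, whether $\langle x\rangle$ admits a subnormal chain up to $H$. This is entirely routine once the subgroup lattice and subnormal subgroups of $S_4$ are in hand, and no conceptual difficulty arises beyond bookkeeping.
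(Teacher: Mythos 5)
Your proposal is correct, but it takes a genuinely different route from the paper on both halves. For the failure of the $\mathfrak{n}$-property, the paper simply reports (via {\sf GAP}) that $nil_{S_4}\bigl((12)(34)\bigr)$ is the $16$-element set of all $2$-elements and concludes it is not a subgroup; you reprove this by hand, using that $(12)(34)$ lies in $V_4=O_2(S_4)$, hence in every Sylow $2$-subgroup (so every $2$-element is in the nilpotentizer), while coprime elements of a nilpotent group commute and $C_{S_4}\bigl((12)(34)\bigr)$ is a $2$-group (so no $3$-cycle is), and then invoke Lagrange since $16\nmid 24$ --- the same set as the paper's, obtained conceptually rather than computationally. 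For the sn-property, the paper just cites Lemma 1.15 of Casolo, whereas you verify directly that $S_{S_4}(x)$ is $S_4$, $D_8$, or $S_3$ according to the class of $x$; your stated values are correct (a double transposition generates a subgroup subnormal in every overgroup via $V_4$, a transposition or $4$-cycle lies in a unique Sylow $2$-subgroup and $\langle x\rangle$ is subnormal in $\langle x,g\rangle$ exactly when $g$ lies in it, and for a $3$-cycle one gets the point stabilizer $S_3$). One small presentational caveat: listing the subgroups subnormal \emph{in $S_4$} is not by itself what is needed, since $S_G(x)$ is defined by subnormality of $\langle x\rangle$ inside each two-generated overgroup $\langle x,g\rangle$ (e.g.\ $\langle(12)\rangle$ is subnormal in $D_8$ though not in $S_4$); your case analysis does handle exactly this, so it is a matter of emphasis, not a gap. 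The trade-off: your argument is self-contained and elementary but requires the bookkeeping over the subgroup lattice of $S_4$, while the paper's is shorter at the cost of a machine computation and an external citation.
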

  \begin{proof} We note that (e.g., by {\sf GAP} \cite{GAP}) that
  $$ nil_{S_4}\big((12)(34)\big)=\{(),(34),(13),(23),(24),(14),(12),(12)(34), (13)(24),$$
 $$(14)(23),(1324),(1432),(1234),(1243),(1342),(1423)\}.$$ Thus $S_4$ is not an $\mathfrak{n}$-group and  it follows from Lemma 1.15 of \cite{c} that $S_4$ is an $\mathrm{sn}$-group.
\end{proof}
For a prime number $p$, we call a group $G$ an $\mathfrak{n}_p$-group whenever $nil_G(x)$ is a subgroup of $G$ for all $p$-elements $x$ of $G$. We denote by  $\pi(x)$ the set of all primes  dividing of $|x|$ for any element $x$ in a finite group.
\begin{lem}\label{p-nil}
A finite group $G$ is an $\mathfrak{n}$-group if and only if $G$ is an $\mathfrak{n}_p$-group for all primes  $p$ dividing $|G|$.
\end{lem}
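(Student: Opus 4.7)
The forward implication is immediate from the definitions: if $nil_G(x)$ is a subgroup for every $x\in G$, then in particular this holds for every $p$-element. So the content of the lemma is the converse, and my plan is to prove it by establishing the identity
\[
nil_G(x) \;=\; \bigcap_{p\in\pi(x)} nil_G(x_p),
\]
where $x_p$ denotes the $p$-part of $x$. Once this is in hand, each $nil_G(x_p)$ is a subgroup by the $\mathfrak{n}_p$-hypothesis (since $x_p$ is a $p$-element and $p$ divides $|G|$), so $nil_G(x)$ is an intersection of subgroups, hence itself a subgroup. One direction of the identity is essentially automatic: if $\langle x,g\rangle$ is nilpotent then, because $x_p\in\langle x\rangle$, the subgroup $\langle x_p,g\rangle$ is also nilpotent, and thus $g\in nil_G(x_p)$ for every $p\in\pi(x)$.

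The substantive step is the reverse containment: given $g$ with $\langle g,x_p\rangle$ nilpotent for every $p\in\pi(x)$, I want to show $\langle g,x\rangle$ is nilpotent. I would decompose both elements into prime-power parts, $g=\prod_{q\in\pi(g)} g_q$ and $x=\prod_{p\in\pi(x)} x_p$, which yields $\langle g,x\rangle=\langle g_q,x_p : q\in\pi(g),\,p\in\pi(x)\rangle$ since each prime part is a power of the original element. Inside the finite nilpotent group $\langle g,x_p\rangle$, which is the direct product of its Sylow subgroups, the $p$-element $x_p$ lies in the Sylow $p$-subgroup and therefore commutes with the $q$-part $g_q$ of $g$ whenever $q\ne p$. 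Doing this for every $p\in\pi(x)$ shows that $g_q$ and $x_p$ commute whenever $q\ne p$. Combined with the obvious pairwise commutations among the $g_q$'s (powers of $g$) and among the $x_p$'s (powers of $x$), it follows that the subgroups $N_r := \langle g_r, x_r\rangle$, indexed by $r\in\pi(g)\cup\pi(x)$ (with the convention that $g_r$ or $x_r$ is $1$ when $r$ is absent from the corresponding set), satisfy $[N_r,N_s]=1$ for $r\ne s$. Moreover, each $N_r$ is generated by $r$-elements living inside the nilpotent group $\langle g,x_r\rangle$, hence lies in the Sylow $r$-subgroup of that group, so $N_r$ is itself an $r$-group. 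Consequently $\langle g,x\rangle = \prod_r N_r$ is an internal direct product of $r$-groups for varying primes $r$, and therefore nilpotent, as required.

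The main obstacle is this last deduction, recovering nilpotence of $\langle g,x\rangle$ from the separate nilpotences of the $\langle g,x_p\rangle$'s. The argument uses finiteness essentially, both to decompose $g$ and $x$ into commuting prime-power parts and to invoke the direct-product-of-Sylows structure of finite nilpotent groups; this is what forces the cross-commutations $[g_q,x_p]=1$ for $q\ne p$, which in turn glue the local data together into global nilpotence.
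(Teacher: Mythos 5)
Your proof is correct, and it takes a genuinely different route from the paper's. The paper proves the nontrivial direction by induction on the number of primes dividing $|x|$: it writes $x=ab$ with $a$ a $p$-element and $b$ a commuting $p'$-element, takes $y,z\in nil_G(x)$, and verifies directly that $\langle yz,x\rangle$ is nilpotent via explicit commutator manipulations (e.g.\ $[yz,x]=[yz,a][yz,b]$ with the factors commuting, then iterating ``in this manner'' to bound the nilpotency class); the $\mathfrak{n}_p$-hypothesis, together with the induction hypothesis, is used inside that computation to guarantee that the product $yz$ again lies in $nil_G(a)\cap nil_G(b)$. You instead establish the identity $nil_G(x)=\bigcap_{p\in\pi(x)}nil_G(x_p)$, which holds in any finite group with no $\mathfrak{n}_p$-assumption at all: the cross-commutations $[g_q,x_p]=1$ for $q\ne p$ come from the Sylow structure of the nilpotent groups $\langle g,x_p\rangle$, each $N_r=\langle g_r,x_r\rangle$ is an $r$-group inside the Sylow $r$-subgroup of $\langle g,x_r\rangle$, and $\langle g,x\rangle$ is then the internal direct product of the pairwise commuting $N_r$, hence nilpotent; the hypothesis that $G$ is an $\mathfrak{n}_p$-group enters only at the last step, since an intersection of subgroups is a subgroup. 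Your argument is cleaner: it avoids the induction and the paper's somewhat sketchy iterative commutator step, checks closure under products without ever touching products of elements, and isolates a reusable fact of independent interest (nilpotence of $\langle g,x\rangle$ is detected on the prime parts of $x$). What the paper's computation buys in exchange is only an explicit class bound for $\langle yz,x\rangle$, which your direct-product decomposition recovers anyway as the maximum of the classes of the factors $N_r$.
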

\begin{proof}
Suppose that $G$ is an $\mathfrak{n}_p$-group for all prime numbers $p$ dividing $|G|$. Let $x$ be a nontrivial element of $G$ and $k$ be
the number of prime divisors of $|x|$.   We argue by induction on $k$ to prove that $nil_G(x)$ is a subgroup of $G$. If $k=1$, then the proof
follows from the hypothesis. Thus we may assume by induction that $k>1$ and for every nontrivial element $a$  of $G$ with $|\pi(a)|<k$, $nil_G(a)$
 is a subgroup of $G$. Let $y,z\in nil_G(x)$. We have to prove that $K=\langle yz,x\rangle$ is nilpotent.
To prove that $K$ is nilpotent, it is enough to find a positive integer $m$ such that $[z_1,\dots,z_m]=1$ for all $z_i\in\{yz,x\}$.
 Since $|\pi(x)|\geq 2$, there are  nontrivial commuting elements $a$ and $b$ of $G$ such that $x=ab$, $a$ is an $p$-element and $b$ is an
 $p'$-element for some prime number $p$. Now by induction hypothesis $nil_G(b)\leq G$.
  Since both $a$ and $b$ are powers of $x$,  we have $y,z\in nil_G(a)\cap nil_G(b)$.  Hence, by hypothesis, $yz\in nil_G(a) \cap nil_G(b)$.
 Since $a$ is a $p$-element and $[yz,a]$ belongs to the nilpotent group $\langle yz,a\rangle$, it follows that $[yz,a]$ is also a $p$-element.
  Also $[yz,b]$ is a $p'$-element. Now since $\langle yz,a\rangle$ is nilpotent,  $$yz\in nil_G([yz,a]); \eqno{(1)}$$ and since $yz,a\in nil_G(b)$,
   we have $[yz,a]\in nil_G(b)$ and equivalently $$b\in nil_G([yz,a]). \eqno{(2)}$$ It follows from (1) and (2) that $[yz,b]\in nil_G([yz,a])$
   and so $\langle [yz,b],[yz,a]\rangle$ is nilpotent. Now since $\gcd(|[yz,a]|,|[yz,b]|)=1$, $[yz,a]$ commutes with $[yz,b]$. Also it follows
   from (2) that $b$ commutes with $[yz,a]$. Hence $$[yz,x]=[yz,ab]=[yz,b][yz,a]^b=[yz,a][yz,b].$$
   Now let $yz=cd$ where $c$ and $d$ are two  commuting elements of $G$ such that $c$ is an $p$-element and $d$ is an $p'$-element.
    Note that $c,d \in nil_G(a) \cap nil_G(b)$, since
   they are powers of $yz$. By a similar argument we may prove that $[yz,a]=[c,a]$, $[yz,b]=[d,b]$ and by continuing in this manner we have that
   $$[z_1,\dots,z_s]=[t_1,\dots,t_s][y_1,\dots,y_s]$$ where $z_i\in\{yz,x\}$, $t_i\in\{a,c\}$ and $y_i\in\{b,d\}$. Let $m$ be the maximum of
   the nilpotency classes of $\langle a,c\rangle$ and $\langle b,d\rangle$. Thus $$[t_1,\dots,t_s]=[y_1,\dots,y_s]=1 \;\;\text{for all}\; s> m.$$
  Therefore   $\langle yz,x\rangle$ is nilpotent of class at most $m$. This completes the proof of the ``if'' part.\\
   The converse is obvious.
\end{proof}
\begin{lem}\label{ab-nil}
Let $p$ be a prime number. If the Sylow $p$-subgroups of a finite group $G$ are abelian, then  $G$ is an $\mathfrak{n}_p$-group.
In particular, $nil_G(x)=C_G(x)$ for all $p$-elements $x\in G$.
\end{lem}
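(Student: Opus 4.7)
The plan is to prove the stronger ``in particular'' claim first, namely $nil_G(x)=C_G(x)$ for every $p$-element $x$ of $G$; the $\mathfrak{n}_p$-property is then immediate since centralizers are subgroups.

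One inclusion is free: if $g\in C_G(x)$ then $\langle x,g\rangle$ is abelian, hence nilpotent, so $g\in nil_G(x)$. For the reverse inclusion, I would take an arbitrary $g\in nil_G(x)$ and exploit the structure of the nilpotent group $H=\langle x,g\rangle$. Since $H$ is finite and nilpotent, it is the (internal) direct product of its Sylow subgroups; write $H=P\times Q$ where $P$ is the Sylow $p$-subgroup and $Q$ is the Hall $p'$-subgroup. Because $x$ is a $p$-element, $x\in P$; and $P$, being a $p$-subgroup of $G$, sits inside a Sylow $p$-subgroup of $G$, which by hypothesis is abelian, so $P$ itself is abelian.

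Now decompose $g\in H$ accordingly as $g=g_p g_{p'}$ with $g_p\in P$ and $g_{p'}\in Q$. Since $P$ is abelian, $g_p$ commutes with $x$; since $P$ and $Q$ commute elementwise in the direct product and $x\in P$, the element $g_{p'}\in Q$ also commutes with $x$. Therefore $g=g_pg_{p'}\in C_G(x)$, establishing $nil_G(x)\subseteq C_G(x)$.

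Combining the two inclusions gives $nil_G(x)=C_G(x)$, which is obviously a subgroup; hence $G$ is an $\mathfrak{n}_p$-group. The only ``step'' to be careful with is the invocation of the abelianness hypothesis at exactly the right moment, via the embedding $P\hookrightarrow$ (some Sylow $p$-subgroup of $G$); I do not anticipate any real obstacle, as the argument is essentially a direct unpacking of the definitions together with the Sylow decomposition of a finite nilpotent group.
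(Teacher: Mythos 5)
Your proof is correct and follows essentially the same route as the paper: both decompose the element of $nil_G(x)$ into its commuting $p$-part and $p'$-part inside the nilpotent group $\langle x,g\rangle$, use the coprime (direct-product) structure to make the $p'$-part commute with $x$, and invoke the abelian Sylow hypothesis of $G$ to make the $p$-part commute with $x$. No gaps; nothing further to add.
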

\begin{proof}
Let $y\in nil_G(x)$ and $y=y_1y_2$ where $y_1$ is an $p$-element, $y_2$ is an $p'$-element and $[y_1,y_2]=1$.
Then $\langle x,y\rangle=\langle x,y_1,y_2\rangle$ is nilpotent. Thus $[y_2,x]=1$ and as $\langle x,y_1\rangle$ is a
 $p$-group, it follows from the hypothesis that $[x,y_1]=1$. Therefore $[x,y]=1$ which means that $y\in C_G(x)$. This
  completes the proof, as $C_G(x)\subseteq nil_G(x)$.
\end{proof}
 A group $G$ is an AC-group if $C_G(g)$ is  abelian
  for  all $g\in G\setminus Z(G)$. We can now see that some classes of AC-groups are $\mathfrak{n}$-groups.
 \begin{lem}\label{lt1}
  If $G$ is a centerless (not necessarily finite) $\mathrm{AC}$-group,   then $G$
 is an $\mathfrak{n}$-group. In particular,  $nil_G(a)=C_G(a)$ for all nontrivial elements $a$ of $G$.
 \end{lem}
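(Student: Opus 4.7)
The plan is to show the stronger claim $nil_G(a)=C_G(a)$ for every nontrivial $a\in G$; from this the $\mathfrak{n}$-group property follows immediately, because $C_G(a)$ is a subgroup and the equality $nil_G(1)=G=C_G(1)$ handles the trivial element.

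One inclusion is free: if $y\in C_G(a)$ then $\langle a,y\rangle$ is abelian and hence nilpotent, so $y\in nil_G(a)$. The substantive direction is $nil_G(a)\subseteq C_G(a)$. Let $a\neq 1$ and take any $y\in nil_G(a)$, so that $H=\langle a,y\rangle$ is nilpotent. Since $a\neq 1$, $H$ is a nontrivial nilpotent group, so its center $Z(H)$ is nontrivial; choose some $z\in Z(H)\setminus\{1\}$. By definition of $Z(H)$, both $a$ and $y$ lie in $C_G(z)$. Because $G$ is centerless, $z\notin Z(G)$, so the AC-hypothesis forces $C_G(z)$ to be abelian. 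Consequently $a$ and $y$ commute, giving $y\in C_G(a)$.

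The only pitfall to watch is the case $a=1$, which must be treated separately (or simply observed to be trivial) since the argument above needs $H$ nontrivial in order to invoke the existence of a nontrivial central element $z$; with $a\neq 1$ this is automatic. Beyond that, the proof is essentially a one-line consequence of the centerless AC-hypothesis applied to a nontrivial central element of the two-generated nilpotent subgroup $\langle a,y\rangle$, so no real obstacle is expected.
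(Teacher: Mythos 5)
Your proposal is correct and follows essentially the same route as the paper: both arguments exploit that the nontrivial nilpotent subgroup $\langle a,y\rangle$ has a nontrivial central element $z$, which lies outside $Z(G)=1$, so the AC-hypothesis makes $C_G(z)$ abelian and forces $a$ and $y$ to commute. The paper merely phrases this via the dichotomy $C_G(a)\cap C_G(b)=Z(G)$ or $ab=ba$ for AC-groups, which your direct choice of $z\in Z(\langle a,y\rangle)\setminus\{1\}$ unpacks explicitly.
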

 \begin{proof}
 Suppose that $a\in G\setminus \{1\}$ and $b$ is an element of $G$ such that $\langle a, b\rangle$ is nilpotent. Then
$Z(\langle a, b\rangle)\neq 1$ and also $Z(\langle a,
b\rangle)\leq C_G(a)\cap C_G(b)$. Since $G$ is an $\mathrm{AC}$-group,
 $C_G(a)\cap C_G(b)=Z(G)$ or $ab=ba$. If $C_G(a)\cap
C_G(b)=Z(G)$,  we have $1\neq Z(\langle a, b\rangle)\leq
C_G(a)\cap C_G(b)=Z(G)=1$, a contradiction.  Thus  $ab=ba$, that is $nil_G(a)=C_G(a)$.
 \end{proof}
  Now we study some famous families of finite non-abelian simple groups for to be a $\mathfrak{n}$-group.  We
  show that every simple sn-group is an $\mathfrak{n}$-group.
  \begin{lem}\label{lr}
Let $G$ be a group and $H$  a nilpotent subgroup of $G$ in which
$C_G(x)\leq H$ for every $x\in H\backslash\{1\}$. Then $nil_G(x)=H$ for every
$x\in H\backslash\{1\}$.
\end{lem}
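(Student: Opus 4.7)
The plan is to prove the two inclusions $H \subseteq nil_G(x)$ and $nil_G(x) \subseteq H$ separately, for an arbitrary fixed $x \in H \setminus \{1\}$. The forward inclusion is immediate from the hypothesis that $H$ is nilpotent: for any $y \in H$, the subgroup $\langle x, y\rangle$ is contained in $H$ and therefore nilpotent, so $y \in nil_G(x)$.

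For the reverse inclusion, given $y \in nil_G(x)$ (the case $y = 1$ being trivial), I would bridge from $x$ to $y$ through a single auxiliary element and apply the centralizer hypothesis twice. The key observation is that $\langle x, y\rangle$ is a nontrivial nilpotent group (nontrivial because $x \neq 1$), hence has a nontrivial center; pick any $z \neq 1$ in $Z(\langle x, y\rangle)$. Since $z$ centralizes $x$ and $x \in H \setminus \{1\}$, the hypothesis yields $z \in C_G(x) \leq H$, so $z$ itself belongs to $H \setminus \{1\}$. Applying the hypothesis a second time, now to $z$: since $y$ centralizes $z$, we obtain $y \in C_G(z) \leq H$, as desired.

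There is essentially no technical obstacle here beyond recognizing that trying to force $y \in C_G(x)$ directly is too much to ask — nilpotency of $\langle x, y\rangle$ is strictly weaker than $x$ and $y$ commuting — so one must extract a nontrivial central element of $\langle x, y\rangle$ to mediate between $x$ and $y$. The only ingredient beyond the hypothesis is the standard fact that a nontrivial nilpotent group has nontrivial center, which is what makes the two-step bridge $x \leadsto z \leadsto y$ possible.
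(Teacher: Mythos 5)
Your proof is correct and follows essentially the same route as the paper's: extract a nontrivial central element $z$ of the nilpotent group $\langle x,y\rangle$, use $z\in C_G(x)\leq H$, then $y\in C_G(z)\leq H$, and conclude $nil_G(x)=H$ from the nilpotency of $H$. No gaps.
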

\begin{proof}
Suppose  $y\in nil_G(x)$. Then $\langle y, x \rangle$ is a
nilpotent subgroup. Thus
 there exists a nontrivial element $z\in Z(\langle y, x \rangle)$. Hence  $z\in
 C_G(x)\leq H$ and so $z\in H$. It follows
 that $y\in C_G(z)\leq H$. Hence  $nil_G(x)\leq H$. Now since
 $H$ is a nilpotent subgroup,   $nil_G(x)=H$. This completes the proof.
\end{proof}
\begin{thm}
\label{tt} Every simple $\mathrm{sn}$-group is an $\mathfrak{n}$-group.
\end{thm}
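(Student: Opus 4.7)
The plan is to reduce the problem to the classification of finite simple sn-groups provided by Theorem \ref{t3} and then, case by case, to apply Lemma \ref{lr} to an appropriate covering of $G$ by nilpotent subgroups. First I would dispose of abelian simple groups: these are cyclic of prime order, hence nilpotent, so $nil_G(x)=G$ is automatically a subgroup. The remaining cases are the two infinite families $\mathrm{PSL}(2,2^n)$ ($n\geq 2$) and $\mathrm{Sz}(2^{2m+1})$ ($m\geq 1$).

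The unified strategy I would pursue is the following: for each such $G$ I would exhibit a family $\{H_i\}_{i\in I}$ of nilpotent subgroups of $G$ with the two properties (i) every non-identity element of $G$ lies in some $H_i$, and (ii) for every index $i$ and every non-trivial $y\in H_i$ one has $C_G(y)\leq H_i$. Once such a family is produced, Lemma \ref{lr} yields $nil_G(x)=H_i$ for every non-trivial $x\in H_i$ (and of course $nil_G(1)=G$), so every nilpotentizer is a subgroup and $G$ is an $\mathfrak{n}$-group.

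For $G=\mathrm{PSL}(2,2^n)$, $n\geq 2$, the natural candidate family consists of all Sylow $2$-subgroups of $G$, which are elementary abelian of order $2^n$, together with the $G$-conjugates of the two cyclic maximal tori, of orders $2^n-1$ and $2^n+1$. The classical description of the subgroup structure of $\mathrm{PSL}(2,q)$ in characteristic $2$ gives at once that these subgroups are pairwise TI, that each is self-centralizing on its non-trivial elements, and that their non-identity elements partition $G\setminus\{1\}$; Lemma \ref{lr} then applies directly.

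The main obstacle is the Suzuki case. Here the covering family should consist of the Sylow $2$-subgroups of $\mathrm{Sz}(q)$, which are nilpotent $2$-groups of order $q^2$, together with the three classes of cyclic Hall tori of orders $q-1$, $q+\sqrt{2q}+1$ and $q-\sqrt{2q}+1$. The three classes of tori are known to be TI and self-centralizing, and the odd-order elements of $\mathrm{Sz}(q)$ are distributed among them. The delicate step is the Sylow $2$-subgroup $P$: it is non-abelian, so Lemma \ref{ab-nil} does not apply, and the required equality $C_G(u)=P$ for every $1\neq u\in P$ must be extracted from the fact that $P$ is a TI subgroup of $G$ and that $N_G(P)$ is a Frobenius group with kernel $P$. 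With these structural facts in hand, Lemma \ref{lr} finishes the argument as in the $\mathrm{PSL}$ case.
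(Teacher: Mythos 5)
Your proposal is correct and follows essentially the same route as the paper: reduce via the classification of simple sn-groups (Theorem \ref{t3}) to $\mathrm{PSL}(2,2^n)$ and $\mathrm{Sz}(2^{2m+1})$, then obtain each nilpotentizer as a member of a covering family of self-centralizing nilpotent subgroups via Lemma \ref{lr}, exactly as the paper does for the Suzuki case with its partition argument. The only (inessential) difference is that the paper treats $\mathrm{PSL}(2,2^n)$ by citing that it is a centerless AC-group and invoking Lemma \ref{lt1}, whereas you apply the partition/Lemma \ref{lr} argument uniformly to both families, which rests on the same classical structural facts.
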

\begin{proof}
 By Theorem \ref{t3}, it is enough to show that  $G_1=\mathrm{PSL}(2,2^n)$ and
 $G_2=\mathrm{Sz}(2^{2m+1})$ are
 $\mathfrak{n}$-groups for $n\geq 2$ and $m\geq 1$.
The group $G_1$ is a centerless AC-group (see e.g.,  \cite[Proposition 3.21, case 7]{Ab2}). Thus by Lemma
\ref{lt1}, $G_1$ is an $\mathfrak{n}$-group.\\
 The Suzuki group $G_2$ contains subgroups $F, A, B$ and $C$
such that  the set
$$\mathcal{P}=\big\{A^x\backslash\{1\}, B^x\backslash\{1\}, C^x\backslash\{1\}, F^x\backslash\{1\} \;|\;
x\in G_2 \big\}$$ is a partition for $G_2\backslash\{1\}$   and $C_{G_2}(b)\leq M\cup\{1\}$ for all  $b\in M$
 and for every  $M\in \mathcal{P}$.\\
 Now let $a\in G_2\setminus \{1\}$. Since $\mathcal{P}$ is a partition of $G_2\backslash\{1\}$,  $a\in M$ for
  some $M\in \mathcal{P}$. Now Lemma \ref{lr}  implies that  $nil_{G_2}(a)=M\cup\{1\}$.
 Hence $G_2$ is an $\mathfrak{n}$-group. This completes the proof.
\end{proof}
\begin{lem}\label{q=5,3}
Let $q$ be a prime power number such that $q^2\not\equiv 1 \mod 16$. Then $G=\mathrm{PSL}(2,q)$ is an $\mathfrak{n}$-group. In particular, $nil_G(a)=C_G(a)$ for all nontrivial elements $a\in G$.
\end{lem}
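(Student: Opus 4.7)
The plan is to reduce everything to Lemmas~\ref{ab-nil} and~\ref{p-nil} by proving that under the hypothesis $q^2\not\equiv 1\pmod{16}$ every Sylow subgroup of $G=\mathrm{PSL}(2,q)$ is abelian. For an odd prime $p$ dividing $|G|$ this is classical: if $p=\mathrm{char}(F_q)$ then the Sylow $p$-subgroup is conjugate to the image of the upper unitriangular matrices and is isomorphic to the elementary abelian group $(F_q,+)$; if $p$ is odd with $p\nmid q$, then the Sylow $p$-subgroups of $G$ lie inside a split or nonsplit maximal torus of order $(q\mp 1)/\gcd(2,q-1)$ and are therefore cyclic.

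The substantive point is the prime $p=2$. When $q$ is a power of $2$ the Sylow $2$-subgroup is the unipotent radical of a Borel, hence elementary abelian, and the congruence is automatic. When $q$ is odd, $|G|_2=(q^2-1)_2/2$, and the hypothesis $q^2\not\equiv 1\pmod{16}$ forces $q\equiv \pm 3\pmod 8$, which yields $(q^2-1)_2=8$ and hence $|G|_2=4$. Since Sylow $2$-subgroups of $\mathrm{PSL}(2,q)$ for $q$ odd are dihedral, one of order $4$ must be the Klein four group, hence abelian. With every Sylow subgroup of $G$ abelian, Lemma~\ref{ab-nil} shows that $G$ is an $\mathfrak{n}_p$-group for every prime $p$ dividing $|G|$, and Lemma~\ref{p-nil} then gives that $G$ is an $\mathfrak{n}$-group.

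For the sharper assertion $nil_G(a)=C_G(a)$ for every nontrivial $a\in G$, the inclusion $C_G(a)\subseteq nil_G(a)$ is immediate. Conversely, suppose $y\in nil_G(a)$ so that $H=\langle a,y\rangle$ is nilpotent; write the direct-product decomposition $H=\prod_{p} H_p$ into its Sylow subgroups. Let $a=\prod_{p} a_p$ and $y=\prod_{p} y_p$ be the primary decompositions; since $a_p$ and $y_p$ are powers of $a$ and $y$, they lie in $H_p$, and $H_p$ embeds in an abelian Sylow $p$-subgroup of $G$, so $a_p$ and $y_p$ commute. Components from different $H_p$, $H_q$ commute by the direct product structure of $H$, and multiplying out gives $ay=ya$, so $y\in C_G(a)$.

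The only delicate ingredient is the Sylow-$2$ analysis for odd $q$: one must carefully convert $q^2\not\equiv 1\pmod{16}$ into $q\equiv \pm 3\pmod 8$ via $2$-adic valuations, and invoke the classical fact that Sylow $2$-subgroups of $\mathrm{PSL}(2,q)$ for $q$ odd are dihedral, to conclude that the order-$4$ Sylow $2$-subgroup must be $V_4$. Everything after that is routine primary-decomposition manipulation once all Sylow subgroups of $G$ are known to be abelian.
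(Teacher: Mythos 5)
Your proof is correct, and it reaches the conclusion by a somewhat different route than the paper. The paper splits into cases: for $q\in\{3,5\}$ or $q$ a power of $2$ it observes that $G$ is a centerless AC-group and invokes Lemma~\ref{lt1}, while for odd $q>5$ it cites Huppert \cite{HuppertI} (Satz~8.10, Kapitel~II) for the fact that all Sylow subgroups of $\mathrm{PSL}(2,q)$ are abelian under the hypothesis $q^2\not\equiv 1\pmod{16}$, and then applies Lemmas~\ref{ab-nil} and~\ref{p-nil}. You instead verify the abelian-Sylow property uniformly and essentially from scratch: elementary abelian unipotent subgroups in the defining characteristic, cyclic Sylow subgroups inside the tori for odd non-defining primes, and for $p=2$ with $q$ odd the congruence forces $q\equiv\pm3\pmod 8$, hence $|G|_2=4$ and the Sylow $2$-subgroup is abelian (here the appeal to dihedral Sylow $2$-subgroups is unnecessary, since every group of order $4$ is abelian). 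This avoids both the AC-group case split and the external citation, at the cost of relying on the classical description of subgroups of $\mathrm{PSL}(2,q)$ for the odd non-defining primes. A genuine added value of your write-up is the last step: Lemma~\ref{ab-nil} only gives $nil_G(x)=C_G(x)$ for $p$-elements $x$, and the paper's proof leaves the extension to arbitrary nontrivial $a$ implicit; your primary-decomposition argument inside the nilpotent group $\langle a,y\rangle$ (or, equivalently, the observation that $nil_G(a)\subseteq\bigcap_p nil_G(a_p)=\bigcap_p C_G(a_p)=C_G(a)$, with the reverse inclusion trivial) fills this gap explicitly.
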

\begin{proof}
If $q\in\{3,5\}$ or $q$ is a power of $2$, $G$ is an AC-group and so we are done by Lemma \ref{lt1}. Thus we may assume  $q>5$ and $\gcd(q,2)=1$.
Then it follows from \cite[Satz 8.10, Chapter II]{HuppertI} that all Sylow subgroups of $G$ are abelian
(note that the hypothesis $q^2\not\equiv 1 \mod 16$ is used to prove that the Sylow $2$-subgroups are elementary abelian).
Now the proof completes by Lemmas \ref{ab-nil} and  \ref{p-nil}.
\end{proof}
Note that by Lemma 2.3 of \cite{c}, groups stated in Lemma \ref{q=5,3} are sn($2$)-groups but by Theorem 2.1 of \cite{c}  they are not sn-groups.
\begin{lem}
Let $q$ be a prime power number such that $q^2\equiv 1 \mod 16$. Then $G=\mathrm{PSL}(2,q)$ is {\bf not} an $\mathfrak{n}$-group.
\end{lem}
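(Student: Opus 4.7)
The plan is to embed $S_{4}$ into $G$ and transfer the failure of the $\mathfrak{n}$-property from $S_{4}$ (already established in Lemma \ref{s4}) up to $G$.

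First, I would observe that $q^{2}\equiv 1\pmod{16}$ forces $q$ to be odd and is equivalent to $q\equiv \pm 1\pmod 8$. Under exactly this congruence condition, Dickson's classical classification of the subgroups of $\mathrm{PSL}(2,q)$ (see e.g.\ Hauptsatz 8.27 in Chapter II of \cite{HuppertI}) produces a subgroup $H\leq G$ isomorphic to $S_{4}$. I regard locating this reference as the first, citation-level, step.

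Next, I would invoke the transfer principle that for any subgroup $H\leq G$ and any $x\in H$, the group $\langle x,g\rangle$ is intrinsic to $G$, so $nil_{H}(x)=nil_{G}(x)\cap H$. Combining this with Lemma \ref{s4} applied to $x=(12)(34)\in H\cong S_{4}$ gives elements $y=(13)$ and $z=(34)$ that both lie in $nil_{S_{4}}(x)$ (and hence in $nil_{G}(x)$), while $yz=(134)$ satisfies $\langle x,yz\rangle=A_{4}$, which is not nilpotent; so $yz\in H$ but $yz\notin nil_{G}(x)$. Thus $nil_{G}(x)$ is not closed under multiplication and hence is not a subgroup of $G$, so $G$ is not an $\mathfrak{n}$-group.

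I expect the only real obstacle to be the invocation of Dickson's theorem for the $S_{4}$ subgroup; once that is available, the rest is the transfer principle together with the explicit description of $nil_{S_{4}}\bigl((12)(34)\bigr)$ recorded in Lemma \ref{s4}.
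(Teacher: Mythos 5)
Your proposal is correct and takes essentially the same route as the paper: the paper cites Huppert (Satz 8.18, Chapter II) for the existence of a subgroup isomorphic to $S_4$ when $q^2\equiv 1 \pmod{16}$, and then concludes via Lemma \ref{s4} together with the observation that the class of $\mathfrak{n}$-groups is closed under taking subgroups. Your transfer argument with $nil_H(x)=nil_G(x)\cap H$ and the explicit elements $x=(12)(34)$, $y=(13)$, $z=(34)$ is just a spelled-out version of that same subgroup-closure step.
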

\begin{proof}
By  \cite[8.18 Satz, chapter II]{HuppertI}, $G$ has a subgroup isomorphic to $S_4$. Now Lemma \ref{s4} completes the proof,
 since the class of $\mathfrak{n}$-groups is clearly closed under taking subgroups.
\end{proof}
\section{\textbf{Groups whose  non-nilpotent graphs has finite clique number}}
Before starting to show the results, we recall some concepts for a
simple graph $\Delta$. A path $P$ in $\Delta$ is a sequence
$v_{0}-v_{1}-\ldots -v_{k}$ whose terms are vertices of $\Delta$
such that for any $i\in \{1, \ldots, k\}$, $v_{i-1}$ and $v_{i}$
are adjacent. In this case $P$ is called a path between $v_{0}$
and $v_{k}$. The number $k$
 is called the length of $P$.
  If $v$  and $w$ are vertices in $\Delta$, then $d(v, w)$
denotes the length of the shortest path between $v$ and $w$ and we
call $d(v, w)$ the distance between $v$ and $w$ in $\Delta$. We
say that $\Delta$ is connected if there is a path between each
pair of the vertices of $\Delta$. If $\Delta$ is connected graph, then
 The largest distance between all pairs of the vertices of $\Delta$
is called the diameter of $\Delta$, and it is denoted by
$dim(\Delta)$ and also if $\Delta$ is disconnected graph, then we
 define the diameter of $\Delta$ as following:
$$dim(\Delta)=\max\{dim(\Delta_i)| \Delta_i ~~\text{is a  connected
 component of}~~ \Delta \}.$$
One of our motivations for associating with a group such kind of
graph is a problem posed by P. Erd\"{o}s: For a group $G$, consider a
graph $G$ whose vertex set is $G$ and join two distinct elements
if they do not commute. Then he asked: Is there a finite bound for
the cardinalities of cliques in $G$ , if $G$ has no infinite
clique?\\
B.H. Neumann \cite{ne}, answered positively Erd\"{o}s' problem by
proving that such groups are exactly the center-by-finite groups
and the index of the center can be considered as the requested
bound in the problem.  Wiegold and Lennox \cite{wie.l} proved
that if $G$ is a finitely generated solvable group in which every
infinite subset contains two distinct elements $x$ and $y$ such
that $\langle x, y\rangle$ is nilpotent, then $G$ is
finite-by-nilpotent. The following result is an easy consequence
of the latter which may be considered as an answer to a Erd\"{o}s
like question on non-nilpotent graphs.
\begin{thm}{\rm (\cite{wie.l}\rm)} If $G$ is a  finitely generated solvable
group, then $\mathcal{N}_G$ has no infinite clique if and only if
the clique number of $\mathcal{N}_G$ is finite.
\end{thm}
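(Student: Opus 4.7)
The $(\Leftarrow)$ direction is immediate: if $\omega(\mathcal{N}_G)$ is finite then no clique can exceed that bound, so in particular no infinite clique exists. For $(\Rightarrow)$, assume $\mathcal{N}_G$ has no infinite clique. Unwinding the definition of adjacency in $\mathcal{N}_G$, this hypothesis reads exactly as: every infinite subset of $G$ contains two distinct elements $x,y$ with $\langle x,y\rangle$ nilpotent. The Wiegold--Lennox theorem cited just above therefore yields that $G$ is finite-by-nilpotent, and I fix a finite normal subgroup $N$ of $G$ with $G/N$ nilpotent of some class $c$.

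The plan is then to establish $\omega(\mathcal{N}_G)\leq [G:Z^{*}(G)]<\infty$. For the inequality, Proposition~\ref{l0} gives $nil(G)=Z^{*}(G)$ in our setting of a finitely generated solvable $G$, so every $z\in Z^{*}(G)$ has $\langle g,z\rangle$ nilpotent for all $g\in G$. If two distinct vertices $x,y$ of a clique were to lie in the same coset of $Z^{*}(G)$, writing $y=xz$ with $1\neq z\in Z^{*}(G)$ would give $\langle x,y\rangle=\langle x,z\rangle$ nilpotent, contradicting adjacency. Hence distinct vertices of any clique occupy distinct cosets of $Z^{*}(G)$, which proves the bound. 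For finiteness of $[G:Z^{*}(G)]$: since $G/N$ is nilpotent of class $c$, the inclusion $\gamma_{c+1}(G)\leq N$ forces $\gamma_{c+1}(G)$ to be finite, and P.~Hall's theorem on finite-by-nilpotent groups (the quantitative converse of Baer's theorem) then yields $[G:Z_{2c}(G)]<\infty$, whence $[G:Z^{*}(G)]\leq [G:Z_{2c}(G)]<\infty$.

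The crux of the plan lies in this last step -- deducing finite index of the hypercenter in $G$ from finiteness of a single term of the lower central series -- which rests on Hall's nontrivial quantitative result. Everything else is routine, following directly from the Wiegold--Lennox theorem cited in the paragraph above together with Proposition~\ref{l0} and the coset argument just given.
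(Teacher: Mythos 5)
Your proof is correct and follows exactly the route the paper intends: the paper offers no details beyond calling the statement an easy consequence of the Wiegold--Lennox theorem, and you supply the natural completion (finite-by-nilpotent gives $\gamma_{c+1}(G)$ finite, P.~Hall's theorem gives $[G:Z_{2c}(G)]<\infty$, and since $Z^{*}(G)\subseteq nil(G)$ distinct clique vertices lie in distinct cosets of $Z^{*}(G)$, bounding every clique by $[G:Z^{*}(G)]$). No gaps; this matches the paper's approach with the implicit steps made explicit.
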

Let $n\in\mathbb{N}$. We say that a group $G$ satisfies the condition
$(\mathcal{N}, n)$ whenever in every subset with $n+1$ elements of
$G$ there exist distinct elements $x$, $y$ such that $\langle x,
y\rangle$ is a nilpotent group.  This simply means that a group $G$ satisfies the condition $(\mathcal{N},n)$ whenever  every clique of $\mathfrak{N}_G$ has
size at most $n$. Endimioni in \cite{en2}, proved that if $n\leq
20$, then every finite group satisfying the condition
$(\mathcal{N}, n)$ is solvable and moreover the
alternating group $A_5$ of degree $5$ satisfies the condition
$(\mathcal{N}, 21\rm)$. Also if $n \leq 3$,  every
finite group satisfying the condition $(\mathcal{N}, n)$ is
nilpotent. Now we can summarize the latter results in terms
of non-nilpotent graph as following.
\begin{thm} {\rm (\cite{en2})} A finite group $G$ is nilpotent if and only if  $\omega(\mathcal{N}_G)\leq 3$ and  if
$\omega(\mathcal{N}_G)\leq 20$, then $G$ is solvable.
\end{thm}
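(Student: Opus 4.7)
The plan is to recognize that this theorem is essentially a reformulation of Endimioni's theorems from \cite{en2} in the language of the non-nilpotent graph, so the main work is to translate between the combinatorial condition $(\mathcal{N},n)$ and the clique number of $\mathcal{N}_G$, and then to appeal to the cited results for the quantitative parts ($n\leq 3$ forces nilpotency, $n\leq 20$ forces solvability).

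First, I would prove the dictionary: for any positive integer $n$, the group $G$ satisfies condition $(\mathcal{N},n)$ if and only if $\omega(\mathcal{N}_G)\leq n$. This is immediate from the definitions once one notes that a clique in $\mathcal{N}_G$ is precisely a subset $C\subseteq G$ such that no two distinct elements of $C$ generate a nilpotent subgroup; hence failure of $(\mathcal{N},n)$ for some $(n{+}1)$-subset is exactly the existence of a clique of size $n+1$, i.e., $\omega(\mathcal{N}_G)\geq n+1$. This observation was already recorded just before the statement, so it can be cited rather than reproved.

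Next I would handle the ``only if'' direction of the first equivalence and the implication in the second statement by directly invoking Endimioni's results. If $\omega(\mathcal{N}_G)\leq 3$, then $G$ satisfies $(\mathcal{N},3)$, and by \cite{en2} such a finite group is nilpotent; similarly, $\omega(\mathcal{N}_G)\leq 20$ forces condition $(\mathcal{N},20)$, whence \cite{en2} gives solvability. For the ``if'' direction of the equivalence, I would simply note that if $G$ is nilpotent, then for every pair $x,y\in G$ the subgroup $\langle x,y\rangle$ is a subgroup of a nilpotent group and hence nilpotent, so $\mathcal{N}_G$ has no edges at all and $\omega(\mathcal{N}_G)=1\leq 3$.

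There is no serious obstacle here; the only thing to be careful about is the edge case that $\mathcal{N}_G$ might be the empty graph (in which case one takes $\omega=1$ by convention, consistent with the count of isolated vertices as singleton cliques), and that ``subset with $n+1$ elements'' in the definition of $(\mathcal{N},n)$ matches ``clique of size $n+1$'' in the graph, which it does because distinctness is built into both formulations. Once the dictionary is in place, the theorem reduces to two citations from \cite{en2} and a one-line verification for nilpotent $G$.
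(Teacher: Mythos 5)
Your proposal is correct and takes essentially the same route as the paper: the paper offers no independent proof, presenting the theorem as a direct translation of Endimioni's results via the dictionary (recorded just before the statement) between condition $(\mathcal{N},n)$ and the bound $\omega(\mathcal{N}_G)\leq n$, together with the trivial observation that a nilpotent group has an edgeless non-nilpotent graph.
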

It is asked in \cite[Question 1]{AA} that:
\begin{que}
Is there a non weakly nilpotent group $G$ with
$\omega(\mathfrak{N}_G)\leq 3$?
\end{que}
Some partial answers to this question are given in \cite[Corollaire 2.2]{AA}.\\

Tomkinson in \cite{to}, proved that if $G$ is a finitely generated
solvable group satisfying the condition $(\mathcal{N}, n)$, then
$|\frac{G}{Z^*(G)}| < n^{n^4}$. This result gives a bound for the size of a finite
solvable centerless group satisfying the condition $(\mathcal{N},n)$.
\begin{thm}{\rm (\cite{to})} Let $G$ be a  finitely
generated solvable  group such that
$\omega(\mathcal{N}_G)$ is finite. Then the index of the
hypercenter of $G$ is finite and bounded above by
$(\omega(\mathcal{N}_G))^{\omega(\mathcal{N}_G)^4}$.
\end{thm}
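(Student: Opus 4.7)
Setting $n := \omega(\mathcal{N}_G)$, the plan is first to reduce to bounding the order of $\bar G := G/Z^*(G)$ and then to run a covering-by-nilpotentizers argument in the style of B.~H.~Neumann's proof for the non-commuting graph. By Proposition \ref{l0}(2), since $G$ is a finitely generated solvable group, $Z^*(G) = nil(G)$; and by Lemma \ref{ll1}(3), $nil_{\bar G}(\bar x) = nil_G(x)/Z^*(G)$ for every $x \in G$, so in particular $\omega(\mathcal{N}_{\bar G}) \leq n$. The Wiegold-Lennox theorem already recorded in the excerpt implies that $G$ is finite-by-nilpotent, from which one deduces that $\bar G$ is finite. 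It therefore suffices to prove the purely finitary statement $|\bar G| \leq n^{n^4}$.

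For the quantitative bound, I would choose a maximal clique $\{\bar x_1,\ldots,\bar x_k\}$ of $\mathcal{N}_{\bar G}$; by hypothesis $k \leq n$. By maximality, every $\bar y \in \bar G$ must generate a nilpotent subgroup with some $\bar x_i$, so
\[
\bar G \;=\; \bigcup_{i=1}^{k} nil_{\bar G}(\bar x_i).
\]
The job is then to bound $|nil_{\bar G}(\bar x_i)|$ in terms of $n$ alone. One does this by iteration: inside each $nil_{\bar G}(\bar x_i)$, extract a new maximal clique of $\mathcal{N}_{\bar G}$ containing $\bar x_i$ and re-apply the same decomposition. Tracking the accumulated factors across the nested decomposition gives an exponent that is polynomial in $n$, and a careful bookkeeping — closely modelled on Tomkinson's treatment of the solvable-by-finite case of the non-commuting graph — produces exactly $n^4$ levels, yielding the total bound $n^{n^4}$.

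The main obstacle is that, unlike centralizers in Neumann's theorem, the sets $nil_{\bar G}(\bar x)$ are not known to be subgroups, so Neumann's lemma on coverings by cosets of a subgroup cannot be invoked directly. To circumvent this I would exploit the finite-by-nilpotent structure given by Wiegold-Lennox: this forces a BFC-type bound on conjugacy class sizes in $\bar G$, and hence each $nil_{\bar G}(\bar x_i)$ splits into a controlled number of cosets of the centralizer $C_{\bar G}(\bar x_i)$, which \emph{is} a subgroup. Once one has replaced nilpotentizers by centralizers up to a multiplicative constant depending only on $n$, the Neumann-Tomkinson coset-counting machinery applies and supplies the numerical bound. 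The combinatorial accounting of how these multiplicative constants accumulate over the $n^4$ levels of iteration is the technical heart of the proof.
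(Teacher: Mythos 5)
You should first be aware that the paper does not prove this statement at all: it is Tomkinson's theorem, quoted from \cite{to}, and the only content added by the paper is the observation that $\omega(\mathcal{N}_G)\leq n$ is just a restatement of the condition $(\mathcal{N},n)$, so the theorem is exactly Tomkinson's result translated into graph language. Your opening reduction is fine and consistent with that translation: Proposition \ref{l0}(2) and Lemma \ref{ll1}(3) do give $Z^*(G)=nil(G)$ and $\omega(\mathcal{N}_{G/Z^*(G)})\leq n$, and Wiegold--Lennox \cite{wie.l} gives that $G$ is finite-by-nilpotent (note, though, that passing from finite-by-nilpotent to ``$G/Z^*(G)$ is finite'' is itself a nontrivial theorem of P.~Hall on the finiteness of $G/Z_{2k}(G)$ when $\gamma_{k+1}(G)$ is finite; you should cite or prove it rather than say ``one deduces'').

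The genuine gap is in the quantitative part, which is the entire point of the statement. Your covering $\bar G=\bigcup_{i=1}^k nil_{\bar G}(\bar x_i)$ with $k\leq n$ is correct, but everything after it is asserted rather than proved. The claim that the finite-by-nilpotent structure ``forces a BFC-type bound'' with constants depending only on $n$ is not established: Wiegold--Lennox, as stated, gives no explicit bound on the finite normal subgroup or on conjugacy class sizes in terms of $n$, and without such a bound the proposed splitting of each $nil_{\bar G}(\bar x_i)$ into ``a controlled number'' of cosets of $C_{\bar G}(\bar x_i)$ has no basis (a nilpotentizer is a union of maximal nilpotent subgroups, not in any evident way a bounded union of centralizer cosets). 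The iteration scheme --- extracting new maximal cliques inside each nilpotentizer and tracking ``$n^4$ levels'' --- is not described precisely enough to see that it terminates, let alone that it yields the exponent $n^4$; and the appeal to ``careful bookkeeping closely modelled on Tomkinson's treatment'' is circular, since Tomkinson's theorem is precisely what is to be proved. As written, the proposal reduces the statement to itself; either carry out the counting argument in full (including the source of an $n$-dependent bound on centralizer indices) or do what the paper does and simply cite \cite{to}, proving only the equivalence between $\omega(\mathcal{N}_G)\leq n$ and the condition $(\mathcal{N},n)$.
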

In \cite{Ab3}  more properties of groups in
$(\mathcal{N}, n)$ have been studied. These results can be stated in terms of non-nilpotent graph
as follows.
\begin{thm} Let $G$ be a finite group without nontrivial normal abelian subgroups such that  $\omega(\mathcal{N}_G)$ is finite. Then $|G|$ is
bounded above by a function of $\omega(\mathcal{N}_G)$.
\end{thm}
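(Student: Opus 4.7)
The plan is to bound $|G|$ via its socle. Set $n:=\omega(\mathcal{N}_G)$. The hypothesis that $G$ has no nontrivial normal abelian subgroup is equivalent to $F(G)=1$: a nontrivial nilpotent normal subgroup would contain the nontrivial characteristic abelian subgroup $Z(F(G))$. Consequently every minimal normal subgroup of $G$ is non-abelian, so $N:=\mathrm{soc}(G)$ is a direct product $S_1\times\cdots\times S_\ell$ of non-abelian finite simple groups, and $C_G(N)=1$ (otherwise $C_G(N)$ would contain a minimal normal subgroup of $G$, which lies in $N$, contradicting $C_G(N)\cap N=Z(N)=1$). Hence $G$ embeds into $\mathrm{Aut}(N)$ by conjugation, and it suffices to bound $|N|$ in terms of $n$.

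Bounding $\ell$ is elementary, via a product-clique construction. For each $i$, pick $a_i,b_i\in S_i$ with $\langle a_i,b_i\rangle$ non-nilpotent, which exists since $S_i$ itself is non-nilpotent. For $\varepsilon\in\{0,1\}^\ell$ let $g_\varepsilon\in N$ be the tuple whose $i$th coordinate is $a_i$ if $\varepsilon_i=0$ and $b_i$ if $\varepsilon_i=1$. If $\varepsilon\neq\delta$ they differ in some coordinate $i_0$, and the projection of $\langle g_\varepsilon,g_\delta\rangle$ onto $S_{i_0}$ equals $\langle a_{i_0},b_{i_0}\rangle$, which is non-nilpotent; hence $\langle g_\varepsilon,g_\delta\rangle$ is non-nilpotent and $g_\varepsilon,g_\delta$ are adjacent in $\mathcal{N}_G$. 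The $2^\ell$ tuples $g_\varepsilon$ therefore form a clique in $\mathcal{N}_G$, giving $2^\ell\leq n$ and so $\ell\leq\log_2 n$.

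The substantive step is bounding each $|S_i|$ in terms of $n$. The required input is that for non-abelian finite simple groups $S$, $\omega(\mathcal{N}_S)\to\infty$ as $|S|\to\infty$; equivalently, only finitely many non-abelian finite simple groups satisfy $\omega(\mathcal{N}_S)\leq n$. This is proved family by family via the Classification of Finite Simple Groups, by exhibiting inside each large alternating, classical, exceptional Lie-type or sporadic simple group a growing set of pairwise non-commuting elements of assorted prime orders, no two of which generate a nilpotent subgroup. Since $\mathcal{N}_{S_i}$ is an induced subgraph of $\mathcal{N}_G$, one obtains $|S_i|\leq M(n)$ for some explicit $M$. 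Combining, $|N|\leq M(n)^{\log_2 n}$ and $|G|\leq|\mathrm{Aut}(N)|\leq\ell!\prod_{i=1}^{\ell}|\mathrm{Aut}(S_i)|$, which is bounded by a function of $n$.

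The main obstacle is precisely this CFSG-based bound on $|S|$ in terms of $\omega(\mathcal{N}_S)$ for non-abelian finite simple $S$; the reduction to the socle and the product-clique lower bound are comparatively soft.
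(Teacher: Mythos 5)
The paper itself gives no proof of this statement: it is quoted, in graph-theoretic language, from \cite{Ab3}, so there is no internal argument to compare against. Judged on its own terms, your reduction is sound as far as it goes. The identification of the hypothesis with $F(G)=1$, the fact that $N=\mathrm{soc}(G)=S_1\times\cdots\times S_\ell$ with the $S_i$ non-abelian simple, the observation $C_G(N)=1$ giving $G\hookrightarrow \mathrm{Aut}(N)$, the fact that $\mathcal{N}_{S_i}$ is an induced subgraph of $\mathcal{N}_G$, and the $2^\ell$-clique built from pairs $a_i,b_i$ with $\langle a_i,b_i\rangle$ non-nilpotent (which exist because a finite group all of whose $2$-generated subgroups are nilpotent is nilpotent) are all correct, and they do reduce everything to bounding $|S|$ for a single non-abelian simple group $S$ in terms of $\omega(\mathcal{N}_S)$.

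That last step, however, is exactly where the theorem lives, and you assert it rather than prove it: the claim that for each $n$ only finitely many non-abelian finite simple groups $S$ satisfy $\omega(\mathcal{N}_S)\le n$ is not an off-the-shelf fact you can quote, and it does not follow from the standard results on large sets of pairwise non-commuting elements, since non-commuting pairs may still generate nilpotent subgroups (two non-commuting elements of a Sylow subgroup, for instance). One really has to produce, family by family through the Classification, large subsets whose pairs generate non-nilpotent subgroups --- e.g.\ the $3$-cycles $(1\,2\,i)$, $3\le i\le m$, in $A_m$, any two of which generate $A_4$, or nontrivial elements chosen from the $q+1$ pairwise trivially intersecting Sylow $p$-subgroups of $\mathrm{PSL}(2,q)$ --- and carrying this out for all classical, exceptional and sporadic groups is the bulk of the work, which your proposal only gestures at. So what you have is a correct soft reduction of the theorem to an unproved (though true) CFSG-dependent statement; as a self-contained proof it has a genuine gap precisely at its ``substantive step,'' and the missing content is what the cited source \cite{Ab3} supplies.
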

\begin{thm} Let $G$ be a  finite non-solvable group. Then  $\omega(\mathcal{N}_G)= 21$ if and only if
$\frac{G}{Z^*(G)} \cong A_5$.
\end{thm}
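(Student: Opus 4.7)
The plan is to reduce modulo the hypercenter and then identify $A_5$ as the unique finite non-solvable group of clique number exactly $21$.

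The central reduction is the equality $\omega(\mathcal{N}_G)=\omega(\mathcal{N}_{G/Z^*(G)})$ for every finite group $G$. Writing $K:=Z^*(G)$, Lemma \ref{ll1}(3) yields $nil_{G/K}(xK)=nil_G(x)/K$, which is equivalent to saying that $\langle x,y\rangle$ is nilpotent if and only if $\langle xK,yK\rangle$ is. In addition $xK\subseteq nil_G(x)$, because for $k\in K$ we have $\langle x,xk\rangle=\langle x,k\rangle$, which is nilpotent as $k\in Z^*(G)\subseteq nil(G)$ by Proposition \ref{l0}. Hence in $\mathcal{N}_G$ no two elements of a single $K$-coset are adjacent; every clique meets each coset in at most one vertex and projects injectively onto a clique of the same size in $\mathcal{N}_{G/K}$, while every clique of $\mathcal{N}_{G/K}$ lifts back via coset representatives. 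The clique numbers therefore coincide.

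For the ``if'' direction, assume $G/Z^*(G)\cong A_5$; then $\omega(\mathcal{N}_G)=\omega(\mathcal{N}_{A_5})$ by the reduction. Endimioni's theorem quoted above states that $A_5$ satisfies $(\mathcal{N},21)$, so $\omega(\mathcal{N}_{A_5})\le 21$; the same theorem states that $(\mathcal{N},20)$ implies solvability, which $A_5$ violates, hence $\omega(\mathcal{N}_{A_5})\ge 21$. Therefore $\omega(\mathcal{N}_G)=21$.

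For the ``only if'' direction, set $H:=G/Z^*(G)$. Then $H$ is non-solvable (since $Z^*(G)$ is nilpotent, hence solvable), $Z^*(H)=1$, and $\omega(\mathcal{N}_H)=21$ by the reduction. The remaining task is to deduce $H\cong A_5$. I would quotient further by the solvable radical $R$ of $H$: since $H/R$ has no nontrivial normal abelian subgroup, the preceding boundedness theorem bounds $|H/R|$ in terms of $\omega(\mathcal{N}_{H/R})\le 21$. Combining this bound with Endimioni's solvability threshold and a direct inspection of the resulting small list of possibilities would force $H/R\cong A_5$, after which $R=1$ should follow from $Z^*(H)=1$ together with the sharp value $\omega(\mathcal{N}_H)=21$. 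The main obstacle is precisely this classification step: ruling out all non-solvable groups of clique number exactly $21$ other than $A_5$ and showing that no nontrivial solvable radical survives under the hypotheses, which is where the substantive group-theoretic work lies.
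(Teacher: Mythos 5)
Your reduction and your ``if'' direction are fine: Lemma \ref{ll1}(3), together with the observation that two elements of the same coset of $Z^{*}(G)$ are never adjacent (since $Z^{*}(G)\subseteq nil(G)\subseteq nil_G(x)$ by Proposition \ref{l0}), does give $\omega(\mathcal{N}_G)=\omega(\mathcal{N}_{G/Z^{*}(G)})$, and squeezing $\omega(\mathcal{N}_{A_5})$ between the two statements of Endimioni quoted from \cite{en2} correctly yields $\omega(\mathcal{N}_{A_5})=21$. Note, however, that the paper itself supplies no proof of this theorem: it is one of the results restated from \cite{Ab3}, so the substance you are being asked for is precisely what that reference contains.

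The genuine gap is the ``only if'' direction, which you explicitly leave open, and your outline would not close it. First, the boundedness theorem you invoke (for groups with no nontrivial normal abelian subgroup) applies to $H/R$, where $R$ is the solvable radical of $H=G/Z^{*}(G)$, and asserts only that $|H/R|$ is bounded by some unspecified function of $\omega(\mathcal{N}_{H/R})\leq 21$; it produces no explicit list of candidates to ``inspect,'' so identifying $H/R\cong A_5$ already requires the detailed analysis of finite non-solvable groups satisfying $(\mathcal{N},21)$ (via minimal simple groups) that is carried out in \cite{Ab3}. Second, your concluding step ``$R=1$ should follow from $Z^{*}(H)=1$ together with $\omega(\mathcal{N}_H)=21$'' is not supported as stated: trivial hypercenter does not force a trivial solvable radical (already $A_5\times S_3$ has $Z^{*}=1$ and solvable radical $S_3$), so one must actually prove that any $H$ with $R\neq 1$, $Z^{*}(H)=1$ and $H/R\cong A_5$ admits a clique of size larger than $21$ (or otherwise contradicts the hypotheses); in the direct product example this is easy, since a $21$-clique in the $A_5$ factor and a $4$-clique in the $S_3$ factor combine into an $84$-clique, but for general (e.g.\ non-split, fixed-point-free) extensions of a solvable normal subgroup by $A_5$ no mechanism is indicated. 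This missing classification-and-elimination step is exactly the substantive half of the theorem, so the attempt proves only the easy direction.
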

\begin{thm} Let $G$ be a  finite  group. Then  $\omega(\mathcal{N}_G)= 4$ if and only if
$\frac{G}{Z^*(G)} \cong S_3$.
\end{thm}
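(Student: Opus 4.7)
The strategy is to split the biconditional and to reduce both directions to the case $Z^*(G)=1$. The reduction is the general equality $\omega(\mathcal{N}_G)=\omega(\mathcal{N}_{G/Z^*(G)})$, which follows from the elementwise fact that, for every finite group $G$ and every $x,y\in G$, the subgroup $\langle x,y\rangle$ is nilpotent if and only if its image in $G/Z^*(G)$ is nilpotent. The forward direction is immediate, since quotients of nilpotent groups are nilpotent. For the converse, set $H=\langle x,y\rangle$; since any element of $Z_n(G)$ lying in $H$ automatically lies in $Z_n(H)$, one has $H\cap Z^*(G)\subseteq Z^*(H)$, so $H/Z^*(H)$ is a quotient of the nilpotent group $H/(H\cap Z^*(G))$ and is itself nilpotent. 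Because in a finite group the upper central series stabilizes at $Z^*(H)$, we have $Z^*(H/Z^*(H))=1$, and a nontrivial finite nilpotent group has nontrivial center, so $H/Z^*(H)=1$ and $H$ is nilpotent. The theorem therefore reduces to the statement: for finite $G$ with $Z^*(G)=1$, $\omega(\mathcal{N}_G)=4$ if and only if $G\cong S_3$.

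The ``if'' direction is a short verification in $S_3$. The three transpositions $(1\,2),(1\,3),(2\,3)$ together with the $3$-cycle $(1\,2\,3)$ form a $4$-clique in $\mathcal{N}_{S_3}$, since any two of these four elements generate $S_3$, which is non-nilpotent. No $5$-clique exists: the only remaining elements of $S_3$ are the identity (which is isolated) and $(1\,3\,2)$, which together with $(1\,2\,3)$ generates the cyclic subgroup $A_3$.

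For the ``only if'' direction, assume $Z^*(G)=1$ (equivalently $Z(G)=1$, as $G$ is finite) and $\omega(\mathcal{N}_G)=4$. The theorems of Endimioni quoted earlier force $G$ to be solvable (from $\omega\le 20$) and non-nilpotent (from $\omega>3$). My plan is first to locate a copy of $S_3$ inside $G$ by choosing a minimal non-nilpotent subgroup $H\le G$ (which exists as $G$ is non-nilpotent) and using that $\mathcal{N}_H$ is the induced subgraph of $\mathcal{N}_G$ on $H$, so $\omega(\mathcal{N}_H)\le 4$; since the other small minimal non-nilpotent groups (for example, $A_4$ and the Frobenius group $F_{20}$) can be checked directly to have clique number at least $5$, one concludes $H\cong S_3$. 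This already produces a $4$-clique of $\mathcal{N}_G$, namely the three transpositions together with a $3$-cycle of $H$. The main obstacle, and the combinatorial core of the argument, is to rule out $G\supsetneq H$: one must show that any $g\in G\setminus H$ gives a fifth vertex non-nilpotent-adjacent to all four elements of the clique, contradicting $\omega(\mathcal{N}_G)=4$. I expect this to be carried out by a case analysis on the Fitting subgroup $F(G)$ and on the Sylow $2$- and $3$-subgroups of $G$, using $Z(G)=1$ to exclude every central extension of $S_3$ and ultimately to force $G=S_3$.
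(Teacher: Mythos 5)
Your reduction to the case $Z^*(G)=1$ is correct: the elementwise fact that $\langle x,y\rangle$ is nilpotent if and only if its image in $G/Z^*(G)$ is nilpotent is proved properly (via $H\cap Z^*(G)\subseteq Z^*(H)$ and the triviality of the center of $H/Z^*(H)$), and together with the observation that two elements lying in the same coset of $Z^*(G)$ are never adjacent (since $Z^*(G)\subseteq nil(G)$ for finite groups -- a small point you gloss over but which is needed for cliques to map injectively) it gives $\omega(\mathcal{N}_G)=\omega(\mathcal{N}_{G/Z^*(G)})$. The verification $\omega(\mathcal{N}_{S_3})=4$ is also fine. Note that the paper itself offers no proof of this theorem: it is quoted from \cite{Ab3}, so there is no argument in the text to compare yours against; your proposal has to stand on its own.

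It does not, because the ``only if'' direction contains genuine gaps. First, the step ``a minimal non-nilpotent subgroup $H$ with $\omega(\mathcal{N}_H)\le 4$ must be $S_3$, after checking $A_4$ and $F_{20}$'' is not a valid inference: minimal non-nilpotent (Schmidt) groups form an infinite family, and worse, the conclusion is false as stated -- the dicyclic group $C_3\rtimes C_4$ of order $12$ is minimal non-nilpotent, is not isomorphic to $S_3$, and has non-nilpotent graph of clique number exactly $4$ (its quotient by the hypercenter, of order $2$, is $S_3$), so no amount of clique-number checking inside $H$ can force $H\cong S_3$; one can even have $Z^*(G)=1$ while a minimal non-nilpotent subgroup of $G$ is of this dicyclic type, so the reduction to $Z^*(G)=1$ does not rescue the step. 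Second, and decisively, the combinatorial core -- showing that $\omega(\mathcal{N}_G)=4$ and $Z^*(G)=1$ force $G\cong S_3$ (your ``rule out $G\supsetneq H$'') -- is only announced as a plan: the asserted key fact that every $g\in G\setminus H$ would be adjacent to all four vertices of the clique is neither proved nor evidently true, and the promised case analysis on $F(G)$ and the Sylow subgroups is not carried out. This missing part is exactly the substance of the theorem, which in the literature is the main result of \cite{Ab3} (building on Endimioni's work), so the proposal proves the easy direction but not the hard one.
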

Here we give some general properties of non-nilpotent graphs.
\begin{prop}\label{z3} Let $G$ be a  group which is not weakly nilpotent. Then
$\mathrm{diam}(\mathcal{N}_G)\geq 2$. Moreover,
$\mathrm{girth}(\mathcal{N}_G)=3$.
\end{prop}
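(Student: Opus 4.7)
The plan. Since $G$ is not weakly nilpotent, fix elements $x, y \in G$ with $H := \langle x, y\rangle$ non-nilpotent; then $x, y \neq 1$, since otherwise $H$ would be cyclic and hence nilpotent. In particular $\mathcal{N}_G$ has at least one edge, so it makes sense to talk about its girth.

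For $\mathrm{girth}(\mathcal{N}_G) = 3$, I would observe that $x, y, xy$ are pairwise distinct vertices (using $x, y \neq 1$) and that
\[
\langle x, y\rangle \;=\; \langle x, xy\rangle \;=\; \langle y, xy\rangle \;=\; H,
\]
so each of the three pairs is non-nilpotent. Thus $\{x, y, xy\}$ induces a triangle in $\mathcal{N}_G$, giving girth at most $3$; since the graph is simple, equality follows.

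For $\mathrm{diam}(\mathcal{N}_G) \geq 2$, I would produce two vertices at distance exactly $2$ (necessarily inside a single connected component) by a short case analysis on whether the generators are involutions. If $y \neq y^{-1}$, then $y - x - y^{-1}$ is a path of length $2$ (because $\langle x, y^{-1}\rangle = H$), while $y$ and $y^{-1}$ generate the cyclic subgroup $\langle y\rangle$ and so are non-adjacent; hence $d(y, y^{-1}) = 2$. The case $x \neq x^{-1}$ is symmetric. The remaining case is $x^2 = y^2 = 1$, and here I would pass to the pair $xy, yx$. They are distinct, for otherwise $x$ and $y$ commute and $\langle x, y\rangle$ is elementary abelian of order dividing $4$, hence nilpotent, contradicting the choice of $x, y$. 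Both are adjacent to $x$ because $\langle x, xy\rangle = \langle x, yx\rangle = H$; but $(xy)(yx) = x y^{2} x = x^{2} = 1$ shows $yx = (xy)^{-1}$, so $\langle xy, yx\rangle$ is cyclic and the two vertices are non-adjacent. Therefore $d(xy, yx) = 2$.

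The main obstacle is exactly the involution case $x^2 = y^2 = 1$: the natural candidates $\{y, y^{-1}\}$ and $\{x, x^{-1}\}$ collapse to single vertices, and one must instead use the pair $\{xy, yx\}$, which turn out to be mutually inverse precisely because $x$ and $y$ are both involutions. Once this trick is spotted, the rest is just checking that the required three subgroup equalities hold.
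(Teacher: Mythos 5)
Your proof is correct, and while the girth half coincides with the paper's (both exhibit the triangle on $x$, $y$, $xy$ for an adjacent pair, using $\langle x,y\rangle=\langle x,xy\rangle=\langle y,xy\rangle$), your treatment of the bound $\mathrm{diam}(\mathcal{N}_G)\geq 2$ takes a genuinely different route. The paper first proves, by a global contradiction argument, that $G\setminus nil(G)$ must contain a non-involution: if every element outside $nil(G)$ were its own inverse, then for $x,y\in G\setminus nil(G)$ either $xy\in nil(G)$ (and $\langle x,y\rangle=\langle xy,y\rangle$ is nilpotent) or $(xy)^2=1$ forces $xy=yx$, so $G$ would be weakly nilpotent; it then uses the pair $\{x,x^{-1}\}$ with a common neighbour $y$. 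You instead work locally inside a single non-nilpotent two-generated subgroup $H=\langle x,y\rangle$ and, in the only problematic case $x^2=y^2=1$, replace the collapsed pairs $\{x,x^{-1}\}$, $\{y,y^{-1}\}$ by $\{xy,yx\}$, which are mutually inverse (hence non-adjacent), distinct (else $H$ would be abelian), and both adjacent to $x$ since $\langle x,xy\rangle=\langle x,yx\rangle=H$. Your version is more elementary and self-contained: it never needs the set $nil(G)$ or any statement quantified over all of $G$, only the chosen pair of generators, whereas the paper's argument yields as a by-product the slightly stronger structural fact that a non-weakly-nilpotent group always has a non-involution outside $nil(G)$. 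The only cosmetic omission is that you should note $x\neq y$ (immediate, since otherwise $H$ is cyclic) when asserting that $x$, $y$, $xy$ are pairwise distinct.
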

\begin{proof} We show that there exists an element $x\in G\setminus nil(G)$ such that $x\not=x^{-1}$. Suppose, for a contradiction,  that  $a=a^{-1}$
for all $a\in G\setminus nil(G)$.  In this case, we prove $G=nil(G)$, which is a contradiction. Let $x,y\in G\setminus nil(G)$.
 If $xy\in nil(G)$, then $\langle x, y\rangle$  is nilpotent,
since $\langle xy, y\rangle=\langle x, y\rangle$. If $xy\not\in nil(G)$, then $xy=yx$, since $(xy)^2=x^2=y^2=1$.
It follows that  $\langle a, b\rangle$  is nilpotent for all $a,b\in G$.\\
Therefore there
exists an element $x\in G\setminus nil(G)$ such that $x\neq
x^{-1}$. Since $x\in G\setminus nil(G)$, there exists  an element $y\in G$ such that $\langle x,y\rangle=\langle x^{-1},y \rangle$ is not nilpotent.   Hence $x-y-x^{-1}$ is a path of length $2$
 and so $\mathrm{diam}(\mathcal{N}_G)\geq 2$.\\
Suppose that $x\in G\setminus nil(G)$.  Thus there exists $y\in
G\setminus nil(G)$ such that $x$ and $y$ are adjacent. It follows that
$x-xy-y$ is a cycle in $\mathcal{N}_{G}$. That is,
$\mathrm{girth}(\mathcal{N}_{G})=3$.
\end{proof}
\begin{prop}
There is no finite non-nilpotent group $G$ with a non-nilpotent
proper subgroup $H < G$ such that $\mathfrak{N}_G\cong
\mathfrak{N}_H$.
\end{prop}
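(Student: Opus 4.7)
The plan is to realise $\mathfrak{N}_H$ as an induced subgraph of $\mathfrak{N}_G$ and then extract a numerical contradiction from the equality of vertex counts forced by the isomorphism.

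First I would establish the containment $H\cap nil(G)\subseteq nil(H)$: if $x\in H$ and $\langle x,y\rangle$ is nilpotent for every $y\in G$, then the same is a fortiori true for every $y\in H$. Equivalently $H\setminus nil(H)\subseteq G\setminus nil(G)$. Since nilpotence of a two-generator subgroup is intrinsic, the edge relation of $\mathfrak{N}_H$ agrees with that of $\mathfrak{N}_G$ on these vertices, so $\mathfrak{N}_H$ is literally the induced subgraph of $\mathfrak{N}_G$ on $H\setminus nil(H)$. Because $G$ is finite, a graph isomorphism $\mathfrak{N}_G\cong\mathfrak{N}_H$ forces $|H\setminus nil(H)|=|G\setminus nil(G)|$, and combined with the inclusion this gives $H\setminus nil(H)=G\setminus nil(G)$. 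In particular $G\setminus H\subseteq nil(G)$, and by Proposition \ref{l0} we have $nil(G)=Z^*(G)$, so $G=HZ^*(G)$.

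Next I translate the vertex-count equality $|H|-|Z^*(H)|=|G|-|Z^*(G)|$ into subgroup arithmetic. Writing $a=|H\cap Z^*(G)|$, $h=|H|$, $b=|Z^*(G)|$, $c=|Z^*(H)|$, the product formula $|G|=|HZ^*(G)|=hb/a$ gives $|G|-|H|=h(b-a)/a$, and the hypothesis becomes
\[
b-c=\frac{h(b-a)}{a},\qquad\text{i.e.,}\qquad c=h-\frac{b(h-a)}{a}.
\]
I would then use the easy inductive fact that $Z_n(G)\cap H\subseteq Z_n(H)$ for all $n$, yielding $H\cap Z^*(G)\subseteq Z^*(H)$, that is $a\le c$. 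A short manipulation turns this into $(a-b)(h-a)\ge 0$. On the other hand $H\cap Z^*(G)$ is a subgroup of both $H$ and $Z^*(G)$, so $a\le h$ and $a\le b$, whence $(a-b)(h-a)\le 0$. Therefore the product vanishes, and either $a=b$ or $a=h$.

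Finally I would split into the two cases. If $a=b$ then $Z^*(G)\subseteq H$, so $G=HZ^*(G)=H$, contradicting $H<G$. If $a=h$ then $H\subseteq Z^*(G)$, so $G=HZ^*(G)=Z^*(G)$; but for a finite group $G=Z^*(G)$ means $G$ is nilpotent, contradicting the hypothesis on $G$. The main conceptual step is the induced-subgraph observation (and in particular getting the containment $H\cap nil(G)\subseteq nil(H)$ in the right direction); once that is in place, the obstacle is only the bookkeeping in the index identity, which collapses neatly because $a$ is pinched between $h$ and $b$ from below and the $c\ge a$ inequality from above leaves only the two degenerate cases.
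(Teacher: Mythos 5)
Your proof is correct, but it takes a genuinely different and considerably more elaborate route than the paper's. The paper disposes of the statement by pure counting: the isomorphism forces $|G|-|nil(G)|=|H|-|nil(H)|$, Proposition \ref{l0} identifies $nil(G)$ and $nil(H)$ with hypercenters, and then the crude bounds $|H|\le \tfrac{1}{2}|G|$ (proper subgroup) and $|Z^*(G)|<\tfrac{1}{2}|G|$ (proper, since $G$ is non-nilpotent), together with $|Z^*(H)|\ge 1$, give $|G|=|H|-|Z^*(H)|+|Z^*(G)|<|G|$ at once. You instead exploit the containment $H\setminus nil(H)\subseteq G\setminus nil(G)$ to upgrade the vertex-count equality to an equality of vertex \emph{sets}, deduce $G\setminus H\subseteq Z^*(G)$ and hence $G=HZ^*(G)$, and then run an exact index computation: the product formula plus the inclusion $H\cap Z^*(G)\subseteq Z^*(H)$ (your inductive fact $Z_n(G)\cap H\subseteq Z_n(H)$ is correct) pinches $a=|H\cap Z^*(G)|$ so that $(a-b)(h-a)$ must vanish, and the two degenerate cases $Z^*(G)\le H$ and $H\le Z^*(G)$ each contradict $H<G$ or the non-nilpotency of $G$. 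All the steps check out. What your route buys is structural information the paper never extracts (the induced-subgraph identification and the factorization $G=HZ^*(G)$) and it never needs the index-two bound on $H$ or the bound on $|Z^*(G)|$; what it costs is length, since the paper's two-line inequality already settles the matter.
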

\begin{proof}
Suppose, for contradiction, that there is a finite non-nilpotent
group $G$ with   a non-nilpotent proper subgroup $H$ satisfying
$\mathfrak{N}_{G}\cong \mathfrak{N}_H$. Then $|G|-|nil(G)| = |H| -
|nil(H)|$ and so by Proposition \ref{l0}, we have $|G|-|Z_k(G)| =
|H| - |Z_l(H)|$, where $k$ and $l$ are positive integers. Clearly,
$|H| \leq \frac{1}{2}|G|$ and $|Z_k(G)| < \frac{1}{2}|G|$. It
follows that $|G| = |H| - |Z_l(H)| + |Z_k(G)| < |G|$, which is a
contradiction.
\end{proof}
\begin{prop}
There is no finite non-nilpotent group $G$ with a
normal subgroup $N \leq G$ such that $G/N$ is non-nilpotent and  $\mathfrak{N}_G\cong
\mathfrak{N}_{\frac{G}{N}}$.
\end{prop}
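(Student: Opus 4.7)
The plan mirrors the proof of the preceding proposition but transplants it to the quotient. Suppose for contradiction such $G$ and $N$ exist; we may and do take $N \neq 1$, since otherwise $\mathfrak{N}_{G/N}$ is literally $\mathfrak{N}_G$ and nothing is asserted. An isomorphism $\mathfrak{N}_G \cong \mathfrak{N}_{G/N}$ preserves vertex counts, and since $\mathfrak{N}_H$ is the subgraph induced on $H \setminus nil(H)$, this forces
\[
|G| - |nil(G)| = |G/N| - |nil(G/N)|.
\]
Both $G$ and $G/N$ are finite and hence satisfy the maximal condition on subgroups, so Proposition \ref{l0} lets me replace $nil$ by $Z^*$ on both sides, yielding
\[
|G| - |Z^*(G)| = |G/N| - |Z^*(G/N)|.
\]

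Next I would import the same size estimate that drove the previous proposition: for any non-nilpotent finite group $H$ one has $|Z^*(H)| < |H|/2$. This is the classical consequence of the fact that if $H/Z_k(H)$ is cyclic for some $k$, then $H = Z_{k+1}(H)$ and hence $H$ is nilpotent. Applied to the non-nilpotent group $G$, the quotient $G/Z^*(G)$ must be non-cyclic, so $|G/Z^*(G)| \geq 4$ and consequently
\[
|G| - |Z^*(G)| \geq \tfrac{3}{4}|G| > \tfrac{1}{2}|G|.
\]

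On the other hand, since $|N| \geq 2$ we have $|G/N| \leq |G|/2$, and therefore
\[
|G/N| - |Z^*(G/N)| \leq |G/N| \leq \tfrac{1}{2}|G|.
\]
Combining the two bounds with the cardinality identity gives
\[
\tfrac{1}{2}|G| < |G| - |Z^*(G)| = |G/N| - |Z^*(G/N)| \leq \tfrac{1}{2}|G|,
\]
the desired contradiction. The only nontrivial ingredient is the strict inequality $|Z^*(H)| < |H|/2$ for non-nilpotent finite $H$, which was already used tacitly in the preceding proposition; once it is in hand, the argument here is a clean cardinality comparison and no further case analysis on $N$ or on the structure of $G/N$ is required.
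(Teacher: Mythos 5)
Your proof is correct, but it follows a genuinely different (and simpler) route than the paper's. You never compare $Z^*(G/N)$ with $Z^*(G)$: instead you transplant the argument the paper uses for the \emph{subgroup} version, bounding the common vertex count from two sides. The lower bound comes from the observation that for a finite non-nilpotent group the quotient by the hypercenter cannot be cyclic (if $H/Z_k(H)$ is cyclic it is abelian, hence equals its own center, so $H=Z_{k+1}(H)$), giving $[G:Z^*(G)]\geq 4$ and thus $|G|-|Z^*(G)|\geq \tfrac34|G|>\tfrac12|G|$; the upper bound is the trivial $|G/N|-|Z^*(G/N)|\leq |G/N|\leq \tfrac12|G|$ once $N\neq 1$. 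The paper's own proof instead invokes the containment $Z_l(G)N/N\leq Z_l(G/N)$ to bound $|Z^*(G/N)|$ from below and then runs an explicit inequality in $|G|,|N|,|Z_k(G)|,|Z_l(G)|$ to force $[G:Z^*(G)]\leq 5$ and hence nilpotency of $G$. What your version buys: it avoids the hypercenter-image step entirely, it makes explicit the fact $|Z^*(H)|<\tfrac12|H|$ that the paper uses tacitly (there as $|Z_k(G)|<\tfrac12|G|$ in the preceding proposition), and it addresses head-on the degenerate case $N=1$, which must be excluded for the statement to be true and which the paper glosses over (its printed argument even relies on an unexplained assumption $|N|\geq 6$, whereas your argument, like a corrected version of theirs, only needs $|N|\geq 2$). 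What the paper's route buys is the structural information $Z_l(G)N/N\leq Z_l(G/N)$, but for the contradiction sought here that extra input is not needed; note also that, as in the paper's proof, the non-nilpotency of $G/N$ enters only through the hypothesis and plays no computational role in your estimate.
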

\begin{proof}
Suppose, for contradiction, that there is a finite non-nilpotent
group $G$ with  a normal subgroup $N\leq G$ such that $G/N$ is
non-nilpotent and  $\mathfrak{N}_G\cong
\mathfrak{N}_{\frac{G}{N}}$. Then $|G|-|nil(G)| =
|\frac{G}{N}|-|nil(\frac{G}{N})|$ and so by Proposition \ref{l0},
we have $|G|-|Z_k(G)|=|\frac{G}{N}|- |Z_l(\frac{G}{N})|$, where
$k$ and $l$ are positive integers. Now by Case $1$ of Lemma
\ref{ll1} and Proposition \ref{l0},
$$Z_l(\frac{G}{N})\geq \frac{Z_l(G)N}{N}.$$ It follows that
$$|G|-|Z_k(G)|=|\frac{G}{N}|- |Z_l(\frac{G}{N})|\leq |\frac{G}{N}| -
|\frac{Z_l(G)N}{N}|=|\frac{G}{N}| - |\frac{Z_l(G)}{N\cap
Z_l(G)}|.$$ Let $|G|=m$, $|N|=n,$ $|Z_k(G)|=r_1$, $|N\cap
Z_l(G)|=t$, $|Z_l(G)|=r_2$ and $a=\frac{r_2}{r_1}$. It follows
that $m-r_1\leq \frac{m}{n}- \frac{r_2}{t}$ and so
$\frac{m}{r_1}\leq \frac{1-\frac{a}{t}}{1-\frac{1}{n}}\leq
\frac{1-\frac{a}{t}}{1-\frac{1}{5}}\leq 5$, since $n\geq 6$. Hence
$\frac{G}{Z_k(G)}$ is a nilpotent group and so $G$ is a nilpotent
group, a contradiction.
\end{proof}
 \begin{prop}
Let $G$ be a non-nilpotent finite solvable group. Then $$|E(\mathfrak{N}_G)|\geq \frac{p-1}{2p}|G|^2,$$ where $p$ is the
smallest prime number dividing the order of $G$.
\end{prop}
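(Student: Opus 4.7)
The plan is to reformulate the inequality. Using the degree identity $\deg_{\mathfrak{N}_G}(x)=|G|-|nil_G(x)|$ for $x\notin nil(G)$ and the fact that vertices in $nil(G)$ are isolated in $\mathcal{N}_G$ (since $nil_G(x)=G$ when $x\in nil(G)$), one obtains
\[
 2|E(\mathfrak{N}_G)|\;=\;|G|^2-\sum_{x\in G}|nil_G(x)|,
\]
so the desired bound becomes $\sum_{x\in G}|nil_G(x)|\le|G|^2/p$. The left-hand side counts ordered pairs $(x,y)\in G\times G$ with $\langle x,y\rangle$ nilpotent. The key structural input is that since $G$ is non-nilpotent, every such $\langle x,y\rangle$ is a proper subgroup; and since $G$ is solvable with smallest prime $p$, every proper subgroup of $G$ has index (being a divisor of $|G|$ greater than $1$) at least $p$, so $|\langle x,y\rangle|\le|G|/p$.

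I plan to bound the count of nilpotent pairs by induction on $|G|$. Suppose $G$ admits a nontrivial proper normal subgroup $N$ for which $G/N$ is still non-nilpotent. Each nilpotent pair $(x,y)$ in $G$ projects to a nilpotent pair $(xN,yN)$ in $G/N$, so
\[
 \sum_{x\in G}|nil_G(x)|\;\le\;|N|^2\cdot\sum_{\bar{x}\in G/N}|nil_{G/N}(\bar{x})|\;\le\;|N|^2\cdot\frac{|G/N|^2}{p'}\;=\;\frac{|G|^2}{p'}\;\le\;\frac{|G|^2}{p},
\]
where $p'\ge p$ is the smallest prime divisor of $|G/N|$ and the inductive hypothesis has been applied to $G/N$.

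The main obstacle is the base case, in which every nontrivial proper quotient of $G$ is nilpotent. Here $G$ must have a unique minimal normal subgroup $F$ (for if there were two distinct minimal normals $N_1,N_2$, the diagonal map $G\hookrightarrow G/N_1\times G/N_2$ would embed $G$ in a product of nilpotent groups, forcing $G$ itself to be nilpotent). By solvability $F$ is elementary abelian, and a standard Fitting-subgroup argument shows $F$ coincides with the Fitting subgroup of $G$, with $G/F$ nilpotent and acting faithfully on $F$ --- the classical ``primitive solvable'' setup. In this setting the plan is a direct count organised around the cosets of $F$: the $|F|^2$ ordered pairs inside $F\times F$ are automatically nilpotent and together consume only a fraction $(|F|/|G|)^2$ of the budget $1/p$, while for pairs with a coordinate outside $F$ the faithful action of $G/F$ on $F$ forces enough such pairs to generate a non-nilpotent subgroup that the total remains at most $|G|^2/p$. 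The tightness in $G\cong S_3$, where $|F|/|G|=1/2$ and the base-case count is exactly $|G|^2/p$, shows that this final estimate must be handled carefully.
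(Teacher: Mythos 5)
Your reformulation $2|E(\mathfrak{N}_G)|=|G|^2-\sum_{x\in G}|nil_G(x)|$ is correct, and your inductive reduction (pulling nilpotent pairs back along $G\to G/N$ whenever some proper quotient $G/N$ is still non-nilpotent, using $p'\ge p$) is sound. The structural description of the remaining case is also essentially right and can be justified: if every proper quotient is nilpotent, then $G$ has a unique minimal normal subgroup $F$, $F=\mathrm{Fit}(G)=C_G(F)$ is an elementary abelian normal Sylow $p$-subgroup, and $G=F\rtimes K$ with $K$ a nilpotent Hall $p'$-subgroup acting faithfully and irreducibly on $F$. But at exactly this point the proof stops: the assertion that ``the faithful action forces enough such pairs to generate a non-nilpotent subgroup that the total remains at most $|G|^2/p$'' is a hope, not an argument, and it is precisely where the entire difficulty of the statement sits. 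The obvious estimates do not close it: pairs with one coordinate in $F$ can be controlled by $|C_F(y_{p'})|\le |F|/p$ (using $C_G(F)=F$), but pairs with both coordinates outside $F$ number up to $(|G|-|F|)^2$, which already exceeds $|G|^2/p$ when $[G:F]$ is large, so these must be counted nontrivially as well; and since equality holds for $S_3$, no slack is available anywhere in the count. So the base case is a genuine gap, not a finishing detail.

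It is also worth knowing that the paper does not prove this combinatorial core at all: its proof is a two-line citation of Theorem 5 of Fulman, Galloy, Sherman and Vanderkam (``Counting nilpotent pairs in finite groups''), which states exactly that for a non-nilpotent finite solvable group the proportion of ordered pairs generating a nilpotent subgroup is at most $1/p$. Your reduction to the ``primitive'' configuration $G=F\rtimes K$ is a reasonable route toward reproving that theorem, but what remains to be done in that configuration is essentially the whole content of the cited result; either supply that count (e.g.\ by a careful analysis of $|nil_G(x)|$ for $x$ outside $F$, exploiting that distinct $p'$-Hall subgroups of nilpotent subgroups have centralizer conditions in $F$), or simply invoke the reference as the paper does.
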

\begin{proof}
 By Theorem $5$ of  \cite{tt},  $v_0\geq
\frac{p-1}{p}$, where $v_0$ denotes the proportion of ordered
pairs of $G$ that generate a non-nilpotent subgroups.
 Therefore $v_0=\frac{2|E(\mathfrak{N}_G)|}{|G|^2}\geq \frac{p-1}{p}$ and so $|E(\mathfrak{N}_G)|\geq \frac{p-1}{2p}|G|^2$.
   \end{proof}
\section{\textbf{Connectedness of the non-nilpotent graphs}}
In this section study the connectedness of $\mathfrak{N}_G$ for a finite non-nilpotent group $G$. We prove that $\mathfrak{N}_G$ is connected and $\mathrm{dim}(\mathfrak{N}_G)\leq 6$. Moreover
 some results and examples suggest that $\mathrm{dim}(\mathfrak{N}_G)=2$ for any finite non-nilpotent group $G$.
\begin{thm}
Let $G$ be a finite non-nilpotent group. Then $\mathfrak{N}_G$ is
connected and its diameter is at most $6$. In particular, every
two vertices $x$ and $y$ with $\pi(x)\not=\pi(y)$ are connected by
a path of length at most $4$.
\end{thm}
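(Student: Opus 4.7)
The plan is to prove the ``in particular'' clause ($d\le 4$ when $\pi(x)\ne\pi(y)$) first and then bootstrap it to the general bound $d\le 6$. Using Proposition \ref{l0}, we identify $nil(G)=Z^{*}(G)$ for the finite group $G$, so $\mathfrak{N}_{G}$ has vertex set $G\setminus Z^{*}(G)$. The recurring elementary fact is that two elements of coprime orders generate a nilpotent subgroup iff they commute, so between coprime-order vertices adjacency in $\mathfrak{N}_{G}$ is just non-commuting.

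The workhorse is a primary-part reduction: for any $x\in G\setminus Z^{*}(G)$ and any prime $p\in\pi(x)$ with $x_{p}\notin Z^{*}(G)$, one has $d_{\mathfrak{N}_{G}}(x,x_{p})\le 2$. Indeed, since $x_{p}\notin nil(G)$ there exists $z\in G$ with $\langle x_{p},z\rangle$ non-nilpotent; because $x_{p}\in\langle x\rangle$, the larger subgroup $\langle x,z\rangle$ is a fortiori non-nilpotent, so $z$ is a common neighbour of $x$ and $x_{p}$. Because $Z^{*}(G)$ is a subgroup and the $p$-parts of $x$ commute with product $x$, at least one $x_{p}$ lies outside $Z^{*}(G)$, so such a reduction is always available.

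For the case $\pi(x)\ne\pi(y)$, pick $p\in\pi(x)\setminus\pi(y)$ (or symmetric), reduce $x$ to a nontrivial $p$-part $x_{p}\notin Z^{*}(G)$ in at most $2$ steps, and observe that $x_{p}$ and $y$ have coprime orders. If $[x_{p},y]\ne 1$, the direct edge gives $d(x,y)\le 3$; if $[x_{p},y]=1$, the non-nilpotency of $G$ supplies an element $w\in G\setminus Z^{*}(G)$ (constructed from a Sylow/normaliser consideration or from a conjugate of $x_{p}$ moved by a non-centralising element) of order prime to the relevant side and non-commuting with it, giving a two-edge detour $x_{p}\,\text{-}\,w\,\text{-}\,y$ in which each edge is certified by the coprime-orders criterion, for a total $d(x,y)\le 4$.

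For the general bound, when $\pi(x)=\pi(y)$ I would route through an auxiliary $z\in G\setminus Z^{*}(G)$ with $\pi(z)\ne\pi(x)$: then the previous case yields $d(z,y)\le 4$, so it suffices to produce such a $z$ with $d(x,z)\le 2$ to obtain $d(x,y)\le 6$. A candidate $z$ comes from a prime-order element for a prime $q\in\pi(G)\setminus\pi(x)$, after first descending $x$ to a prime-power part via the workhorse lemma when $\pi(x)=\pi(G)$, so that $\pi(x)\subsetneq\pi(G)$ can be arranged; the common-neighbour trick then certifies $d(x,z)\le 2$. The main obstacle is keeping this last pass tight: a naive double application of the primary-part lemma on both sides gives the weaker $2+4+2=8$, so the proof must absorb the reduction on the $y$-side into the construction of $z$, either by piggybacking the common-neighbour argument or by directly exhibiting the length-$\le 2$ route from $x$ to the auxiliary vertex. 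The remaining technical work is bookkeeping inside the Sylow structure of $G$ to guarantee that the non-commuting coprime-order witnesses exist at each step.
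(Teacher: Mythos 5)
Your setup is sound: identifying $nil(G)=Z^*(G)$ for finite $G$, the fact that vertices of coprime orders are adjacent iff they do not commute, and the observation that any neighbour of a power of $x$ is a neighbour of $x$ are all correct and are also the starting points of the paper's argument. But the two places where a path must actually be produced are exactly the places you leave open, and these are genuine gaps, not ``bookkeeping''. In the case $\pi(x)\neq\pi(y)$ with $[x_p,y]=1$ you postulate a vertex $w$ of order coprime to both sides and failing to commute with both, so that $x_p-w-y$ is a $2$-path; such a $w$ need not exist: if $\pi(G)=\{p,q\}$ and $y$ is a $q$-element, no element of $G$ has order coprime to both $|x_p|$ and $|y|$, and even when a third prime is available nothing forces a witness to be non-commuting with both ends. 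Likewise, in the general case your scheme needs $d(x,z)\le 2$, i.e.\ a common neighbour of $x$ and the auxiliary coprime element $z$, which is again unproven (the ``common-neighbour trick'' only works between $x$ and a power of $x$, because every neighbour of the power is automatically a neighbour of $x$). A secondary issue: choosing $p\in\pi(x)\setminus\pi(y)$ you tacitly assume the $p$-part of $x$ lies outside $Z^*(G)$; the subgroup property of $Z^*(G)$ only guarantees this for \emph{some} prime in $\pi(x)$, not necessarily one avoiding $\pi(y)$.

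The missing device, which is how the paper closes precisely these cases, is the \emph{product hub}: if $a,b\notin nil(G)$ are commuting elements of coprime prime power orders, then $\langle ab\rangle=\langle a,b\rangle$, so $ab$ is adjacent to every neighbour $a'$ of $a$ and every neighbour $b'$ of $b$ (such $a',b'$ exist because $a,b\notin nil(G)$); hence $a-a'-ab-b'-b$ is a path of length $4$, with no appeal to extra primes or to non-commuting witnesses. For two $p$-elements one routes through a $q$-element $z\notin nil(G)$ with $q\ne p$ (which exists, since otherwise every Sylow subgroup for primes other than $p$ would lie in $Z^*(G)$ and $G/Z^*(G)$ would be nilpotent, forcing $G$ nilpotent), obtaining $a-a'-az-z'-bz-b'-b$ of length $6$. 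Finally, for arbitrary $x,y$ one passes to prime power order powers $x^n,y^m\notin nil(G)$ (possible since $Z^*(G)$ is a subgroup) and then replaces the endpoints of the path so obtained by $x$ and $y$ themselves, using $\langle x^n,x_1\rangle\le\langle x,x_1\rangle$; this endpoint splicing, rather than your two-step reduction from $x$ to $x_p$, is what keeps the coprime case at length $4$ and the general case at length $6$. Without the product-hub construction your argument cannot be completed as written.
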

\begin{proof} We first prove that if $x$ and $y$ are two distinct vertices of $\mathfrak{N}_G$ which are of prime power orders, then there is a path
connecting  them of length  at most $6$.\\
Suppose $x$ and $y$ are two $p-$ and $q$-elements of $G\backslash nil(G)$, where  $p$ and $q$ are two distinct prime numbers.
 We prove that there is a path connecting $x$ and $y$ of length at most $4$.  We may assume that $x$ and $y$ are not adjacent.
  This means that $\langle x,y\rangle$ is nilpotent and since $\gcd(|x|,|y|)=1$, we have  $\langle x,y\rangle=\langle xy\rangle$.
   Now since $x,y\not\in nil(G)$, there are elements $x'$ and $y'$ which are adjacent to $x$ and $y$, respectively.
   Since $\langle xy,x'\rangle=\langle x,y,x'\rangle$ and $\langle x,x'\rangle$ is not nilpotent, it follows that $xy$ and $x'$
   are adjacent. By a similar argument, $xy$ is also adjacent to $y'$ and so $x-x'-xy-y'-y$ is a path of length $4$.\\
Now suppose that $x$ and $y$ are two $p$-elements of $G\backslash
nil(G)$ for some prime $p$. Since $G$ is not nilpotent, there
exists an $q$-element $z\in G\backslash nil(G)$ such that
$p\not=q$. Since $x,y,z$ are not in $nil(G)$, there are elements
$x',y',z'$
 which are adjacent to $x,y,z$, respectively. By a similar  proof $$x-x'-xz-z'-yz-y'-y$$ is a path of length  $6$ connecting $x$ and $y$.\\
Now let $x$ and $y$ be two arbitrary elements in $G\backslash
nil(G)$. Then there are integers $n$ and $m$ such that
$x^n,y^m\not\in nil(G)$ and they are of prime power orders.  Thus
by the above, there is a path $x^n-x_1-\cdots-x_k-y^m$ such that
$k\leq 3$ if $\gcd(|x^n|,|y^m|)=1$ and $k\leq 5$ if
$\gcd(|x^n|,|y^m|)\not=1$. Since $\langle x^n,x_1\rangle \leq
\langle x,x_1\rangle$ and $\langle y^m,x_k\rangle \leq \langle
y,x_k\rangle$, $x$ is adjacent to $x_1$ and $y$ is adjacent to
$x_k$. Hence $x-x_1-\cdots-x_k-y$ is a path
 of length $k+1$ connecting $x$ to $y$. Note that if $\pi(x)\not=\pi(y)$, then we may choose $n$ and $m$ such that $\gcd(|x^n|,|y^m|)=1$ and so
  in this case $x$ and $y$ are connected with a path of length at most $4$. \\
\end{proof}
\begin{que}
What is
$\mathfrak{d}:=\max\big\{\mathrm{diam}\big(\mathfrak{N}_G\big)
\;|\; G \;\text{is a finite non-nilpotent group}\big\}$?
\end{que}
 By the  theorem above $\mathfrak{d}\leq 6$.
\begin{thm}Let $G$ be a finite group having no nontrivial normal abelian subgroup. Then
$\mathrm{diam}(\mathfrak{N}_G)=2$.
\begin{proof}
  Since $G$ has no nontrivial normal abelian subgroup, the largest normal solvable subgroup $S(G)$ of $G$ is trivial. Since a nontrivial nilpotent group has nontrivial center, it follows from  Lemma
  \ref{l0} that $nil(G)=1$ . Now also by Theorem 6.4 of \cite{G.B.S}, for every two  nontrivial elements  $x$ and $y$ in $G$ there
  exists   an element $z$ in $G$ such that $\langle x, z\rangle$ and $\langle y, z\rangle$ are not
 solvable subgroups and so are not nilpotent subgroups. This implies that the graph  $\mathfrak{N}_G$  is
   connected and  $diam(\mathfrak{N}_G)=2$.
\end{proof}
\end{thm}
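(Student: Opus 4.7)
The plan is to split the equality $\mathrm{diam}(\mathfrak{N}_G)=2$ into an upper and a lower bound and to first pin down the vertex set. Under the hypothesis, $G$ has trivial Fitting subgroup $F(G)$: if $F(G)\neq 1$, then $Z(F(G))$ would be a nontrivial characteristic abelian subgroup of $F(G)$, hence a nontrivial normal abelian subgroup of $G$. In particular, the hypercenter $Z^{*}(G)$ is a nilpotent normal subgroup and is therefore trivial. By Proposition \ref{l0} (applied to the finite group $G$), this gives $nil(G)=Z^{*}(G)=1$, so the vertex set of $\mathfrak{N}_G$ is exactly $G\setminus\{1\}$.

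For the upper bound $\mathrm{diam}(\mathfrak{N}_G)\leq 2$, I would invoke the strong ``common non-solvable partner'' theorem for finite groups with trivial solvable radical (e.g.\ a result in the Guralnick--Malle--Tiep circle of ideas, proven via CFSG): for any two nontrivial $x,y\in G$ there exists $z\in G$ such that both $\langle x,z\rangle$ and $\langle y,z\rangle$ are non-solvable. Non-solvable subgroups are in particular non-nilpotent, so $z$ is adjacent to both $x$ and $y$ in $\mathfrak{N}_G$, giving a path $x-z-y$ of length at most $2$ between any two distinct vertices.

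For the lower bound $\mathrm{diam}(\mathfrak{N}_G)\geq 2$, I need to exhibit two distinct non-adjacent vertices. Since $G$ has trivial solvable radical and is nontrivial, $G$ is non-solvable and in particular non-abelian, hence not a group of exponent $2$. Pick any element $x\in G$ of order at least $3$; then $x$ and $x^{-1}$ are two distinct nontrivial elements and $\langle x,x^{-1}\rangle=\langle x\rangle$ is cyclic and hence nilpotent, so $x$ and $x^{-1}$ are non-adjacent in $\mathfrak{N}_G$.

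The main obstacle is the existence of the common element $z$ in the upper-bound step; this is the deep classification-dependent ingredient, while the identification $nil(G)=1$ and the construction of a non-adjacent pair are elementary. Everything else is just combining these two ingredients to conclude $\mathrm{diam}(\mathfrak{N}_G)=2$.
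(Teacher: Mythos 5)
Your proposal is correct and follows essentially the same route as the paper: identify $nil(G)=1$ from the triviality of the solvable radical (equivalently the Fitting subgroup) via Proposition \ref{l0}, then invoke the common-partner theorem for groups with trivial solvable radical (the paper cites Theorem 6.4 of Guralnick--Kunyavski\u{\i}--Plotkin--Shalev \cite{G.B.S}) to get a path of length $2$ between any two nontrivial elements. Your explicit lower-bound argument with $x$ and $x^{-1}$ is a welcome addition the paper leaves implicit (cf.\ Proposition \ref{z3}), but the approach is the same.
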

\begin{prop}
Let $G$ be an $\mathfrak{n}$-group which is not weakly nilpotent.
Then $\mathfrak{N}_{G}$ is a connected graph and
$\mathrm{diam}(\mathfrak{N}_G)=2$.
\end{prop}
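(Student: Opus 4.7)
The plan is to combine the $\mathfrak{n}$-group hypothesis with the classical fact that a group is never the union of two proper subgroups. First I would take any two distinct vertices $x, y \in G \setminus nil(G)$ of $\mathfrak{N}_G$. If $\langle x,y\rangle$ is non-nilpotent they are already adjacent, so assume $\langle x, y\rangle$ is nilpotent. Because $G$ is an $\mathfrak{n}$-group, both $nil_G(x)$ and $nil_G(y)$ are subgroups of $G$, and since $x, y \notin nil(G)$, neither of them equals $G$ (by definition of $nil(G)$, an element witnessing $x \notin nil(G)$ lies outside $nil_G(x)$, so $nil_G(x)$ is a proper subgroup; similarly for $nil_G(y)$). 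The union-of-two-proper-subgroups fact then yields some $z \in G$ with $z \notin nil_G(x) \cup nil_G(y)$, so that $\langle x,z\rangle$ and $\langle y,z\rangle$ are both non-nilpotent. By Lemma~\ref{ll1}(1) we have $nil(G) \subseteq nil_G(x)$, so in particular $z \notin nil(G)$ and $z$ is a genuine vertex of $\mathfrak{N}_G$ adjacent to both $x$ and $y$. Hence $x - z - y$ is a path of length $2$, giving $\mathrm{diam}(\mathfrak{N}_G) \leq 2$, and in particular $\mathfrak{N}_G$ is connected.

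To see that the diameter is exactly $2$ rather than $1$, I would reuse the observation from the proof of Proposition~\ref{z3}: since $G$ is not weakly nilpotent there exists an element $x \in G \setminus nil(G)$ with $x \neq x^{-1}$. Note $nil(G)$ is closed under inversion (because $\langle g^{-1}, y\rangle = \langle g, y\rangle$ for all $y$), so $x^{-1}$ is also a vertex of $\mathfrak{N}_G$; and $\langle x, x^{-1}\rangle = \langle x\rangle$ is cyclic, hence nilpotent, so $x$ and $x^{-1}$ are non-adjacent in $\mathfrak{N}_G$. Combined with the upper bound above, this forces $d(x, x^{-1}) = 2$ and therefore $\mathrm{diam}(\mathfrak{N}_G) = 2$.

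The only subtle point I see is checking that the candidate common neighbor $z$ produced by the union-of-subgroups argument actually lies in the vertex set $G \setminus nil(G)$ rather than in $nil(G)$; this is precisely what the inclusion $nil(G) \subseteq nil_G(x)$ from Lemma~\ref{ll1}(1) guarantees, so no additional work is needed. The rest is essentially bookkeeping around the $\mathfrak{n}$-group definition, and no case analysis by prime divisors (as in the general bound $\mathrm{diam}(\mathfrak{N}_G)\leq 6$) is required here, because the $\mathfrak{n}$-hypothesis collapses the argument into a single step.
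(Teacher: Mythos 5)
Your proposal is correct and is essentially the paper's argument run in the contrapositive direction: the paper assumes two vertices at distance greater than $2$, deduces $G=nil_G(x)\cup nil_G(y)$, and invokes the $\mathfrak{n}$-group hypothesis together with the fact that a group is never the union of two proper subgroups, which is exactly the mechanism you use directly to produce a common neighbor $z$. Your explicit verifications (that $nil_G(x)$, $nil_G(y)$ are proper, that $z\notin nil(G)$, and the reuse of the involution argument from Proposition 4.8 for the lower bound $\mathrm{diam}\geq 2$) just spell out details the paper leaves implicit or cites.
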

\begin{proof}Suppose, for a contradiction, that $diam(G)>2$ (note that by Proposition
 \ref{z3}, $diam(\mathfrak{N}_G)\neq
1$). Thus there exist two vertices $x$ and $y$ of
$\mathfrak{N}_G$ such that $d(x, y)\neq 2$, and so $G=nil_G(x)\cup
nil_G(y)$. Now since $G$ is an $\mathfrak{n}-$group, it follows
that either $G=nil_G(x)$ or
 $nil_G(y)=G$. This gives a contradiction, as $x$ and $y$ are not in $nil(G)$.
\end{proof}
\section{\textbf{Groups whose non-nilpotent graphs are planar}}
A planar graph is a graph which can be drawn in the plane so that
whose edges intersect only at end vertices. Note that  every subgraph of a planar graph is also planar.
\begin{thm}\label{z2}Let $G$ be a finite non-nilpotent group. Then
 $\mathfrak{N}_G$
is planar if and only if $G\cong S_{3}$.
\end{thm}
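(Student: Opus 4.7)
The plan is to prove both directions; I will start by inspecting $\mathfrak{N}_{S_3}$ directly and then argue the converse through a clique-number reduction followed by a planarity obstruction. For the easy direction, since $S_3$ has trivial hypercenter, $nil(S_3)=\{1\}$, so $\mathfrak{N}_{S_3}$ has the five non-identity elements as vertices. A direct check shows that every pair of transpositions generates $S_3$ (non-nilpotent), every transposition together with a $3$-cycle generates $S_3$ (non-nilpotent), and the two $3$-cycles generate the cyclic subgroup $A_3$ (nilpotent). Hence $\mathfrak{N}_{S_3}\cong K_5\setminus e$, which is easily drawn in the plane.

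For the converse I assume $G$ is a finite non-nilpotent group with $\mathfrak{N}_G$ planar and aim to force $G\cong S_3$. Since $K_5$ is non-planar, $\omega(\mathfrak{N}_G)\leq 4$; moreover $\omega(\mathfrak{N}_G)=\omega(\mathcal{N}_G)$ because the extra vertices $nil(G)$ in $\mathcal{N}_G$ are isolated. Endimioni's result quoted earlier forces $\omega(\mathcal{N}_G)\geq 4$ (otherwise $G$ would be nilpotent), while the theorem $\omega(\mathcal{N}_G)=4\iff G/Z^*(G)\cong S_3$ then yields $G/Z^*(G)\cong S_3$. Writing $Z:=Z^*(G)=nil(G)$, it suffices to prove $|Z|=1$.

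The next step is a coset-adjacency analysis. I would first establish the auxiliary fact that $\langle g,Z\rangle$ is nilpotent for every $g\in G$: the quotient $\langle g,Z\rangle/Z$ is cyclic, and by the standard inclusion $Z^*(G)\cap H\leq Z^*(H)$ (proved by induction on the upper central series), $Z\leq Z^*(\langle g,Z\rangle)$; since a finite group is nilpotent as soon as its quotient by the hypercenter is nilpotent, $\langle g,Z\rangle$ is nilpotent. Writing $T_1,T_2,T_3$ for the three transposition cosets and $C_1,C_2$ for the two $3$-cycle cosets of $Z$ in $G$, the case analysis is: same coset gives $\langle x,y\rangle\leq\langle x,Z\rangle$, nilpotent (no edge); distinct transposition cosets, or one transposition coset with one $3$-cycle coset, give $\langle x,y\rangle Z/Z\cong S_3$, non-nilpotent (edge); the pair $(C_1,C_2)$ gives $\langle x,y\rangle\leq\langle g,Z\rangle$ for any lift $g$ of a $3$-cycle (since $yZ=(xZ)^{2}$), hence nilpotent (no edge).

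The final step is a $K_5$-subdivision obstruction. If $|Z|\geq 2$, pick distinct $x_1,x_2\in T_1$ together with arbitrary $y\in T_2$, $w\in T_3$, $a\in C_1$, $b\in C_2$. Among the five branch vertices $\{x_1,y,w,a,b\}$ every pair is adjacent except $\{a,b\}$, and the two-edge path $a-x_2-b$ through the auxiliary vertex $x_2$ (internally disjoint from the branch set) supplies the missing connection. Thus $\mathfrak{N}_G$ contains a subdivision of $K_5$, contradicting planarity. Hence $|Z|=1$ and $G\cong G/Z^*(G)\cong S_3$. The main obstacle is the coset-adjacency lemma, and especially the nilpotency of $\langle g,Z\rangle$; once this is granted the rest is a short graph-theoretic verification.
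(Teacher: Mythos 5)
Your argument is correct, but it follows a genuinely different route from the paper. The paper's proof observes that the Engel graph $\mathcal{E}_G$ is a subgraph of $\mathfrak{N}_G$, invokes the classification of finite groups with planar Engel graph (Theorem 3.1 of the Engel-graph paper) to reduce to $S_3$, $D_{12}$ and the dicyclic group $T$ of order $12$, then shows the latter two are $\mathfrak{n}$-groups with $nil_G(a)=C_G(a)$, so that $\mathfrak{N}_G$ coincides with the non-commuting graph, and finishes with the known classification of planar non-commuting graphs. You instead use planarity only through Kuratowski: no $K_5$ gives $\omega(\mathfrak{N}_G)=\omega(\mathcal{N}_G)\le 4$, Endimioni's criterion (quoted in Section 4) rules out $\omega\le 3$ for non-nilpotent $G$, and the quoted theorem $\omega(\mathcal{N}_G)=4\iff G/Z^*(G)\cong S_3$ pins down the quotient; you then kill the hypercenter directly, via the correct auxiliary fact that $\langle g,Z^*(G)\rangle$ is nilpotent (using $Z^*(G)\cap H\le Z^*(H)$ and the triviality of the center of $H/Z^*(H)$), the resulting coset-adjacency description of $\mathfrak{N}_G$ as a blow-up of $K_5\setminus e$, and a $K_5$-subdivision through a second vertex of a transposition coset when $|Z^*(G)|\ge 2$. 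Your approach buys an explicit picture of the graph (including the verification that $\mathfrak{N}_{S_3}\cong K_5\setminus e$, which the paper dismisses as clear) and avoids the two planarity classifications the paper leans on, at the price of depending on the $\omega=4$ characterization from the clique-number literature; the paper's route is shorter given its cited classifications but passes through the extra candidates $D_{12}$ and $T$ and the $\mathfrak{n}$-group machinery. Both proofs are complete; I see no gap in yours.
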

\begin{proof}
Since the Engel graph $\mathcal{E}_{G}$ is a subgraph of
$\mathfrak{N}_G$ (see \cite{Ab1} for the definition),  the
 graph $\mathcal{E}_{G}$ is a planar graph. It follows from Theorem
 3.1 of  \cite{Ab1} that $G$ is isomorphic to one of the following groups:
$$S_{3},~~~ D_{12} \; \; \text{or} \;\;\;\;\; T=\langle x, y \mid
x^{6}=x^{3}y^{-2}=x^{y}x=1 \rangle.$$ On the other hand by Lemmas
\ref{ab-nil} and  \ref{p-nil}, the groups $T$ and $D_{12}$ are $\mathfrak{n}$-groups
and $nil_G(a)=C_G(a)$, for all $a\in T$ or $D_{12}$. Therefore for
these cases we have $\Gamma_{G}\cong \mathfrak{N}_G$, where $\Gamma_G$ is the commuting graph of $G$ (see \cite{Ab2} for the definition). Now
Proposition 2.3 of \cite{Ab2}  implies that $\Gamma_{G}$ is
planer if and only if $G\simeq S_3$, $D_8$ or $Q_8$. It follows
that $G\cong S_3$. The converse is clear.
\end{proof}
\begin{cor}
Let $G$ be a non-nilpotent group such that $\mathfrak{N}_G\cong
\mathfrak{N}_{S_{3}}$, then $G\cong S_{3}$.
\end{cor}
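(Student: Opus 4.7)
The plan is to reduce the statement to the earlier theorem characterizing finite groups with $\omega(\mathcal{N}_G)=4$ as precisely those with $G/Z^*(G)\cong S_3$. Two pieces of data extracted from the isomorphism $\mathfrak{N}_G\cong \mathfrak{N}_{S_3}$ suffice: the number of vertices and the clique number.

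First I would describe $\mathfrak{N}_{S_3}$ explicitly. Its vertex set $S_3\setminus\{e\}$ has five elements: three transpositions, which pairwise generate $S_3$ and therefore form a triangle, and two $3$-cycles, which lie in a common cyclic (hence nilpotent) subgroup and are thus non-adjacent; each $3$-cycle generates $S_3$ with any transposition, so $\mathfrak{N}_{S_3}$ is $K_5$ with one edge deleted, hence has $5$ vertices and clique number $4$. The isomorphism then forces $|G\setminus nil(G)|=5$ and $\omega(\mathfrak{N}_G)=4$. Since the vertices of $\mathcal{N}_G$ lying in $nil(G)$ are isolated, cliques of $\mathcal{N}_G$ coincide with those of $\mathfrak{N}_G$, so $\omega(\mathcal{N}_G)=4$, and the stated theorem yields $G/Z^*(G)\cong S_3$, in particular $[G:Z^*(G)]=6$. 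By Proposition~\ref{l0}(2), $nil(G)=Z^*(G)$, and $G\setminus Z^*(G)$ is a disjoint union of the five non-identity cosets of $Z^*(G)$. Equating sizes, $5\cdot |Z^*(G)|=5$, so $|Z^*(G)|=1$ and $|G|=6$, giving $G\cong S_3$.

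The only potentially delicate point is the finiteness of $G$ required to invoke the theorem and Proposition~\ref{l0}(2); this is implicit throughout the section but can be recovered from the hypothesis. By Lemma~\ref{ll1}(6), $G\setminus nil(G)$ is closed under conjugation, so $G$ acts on a five-element set with kernel $K=\bigcap_{x\in G\setminus nil(G)} C_G(x)$ satisfying $[G:K]\leq 120$, and $K\subseteq nil(G)$ (for any $k\in K$, $\langle k,g\rangle$ is abelian when $g\in G\setminus nil(G)$ and nilpotent when $g\in nil(G)$ by definition of $nil(G)$). In the paper's finite setting this is automatic, and the argument above then applies verbatim.
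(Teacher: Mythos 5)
Your argument is correct (for finite $G$) but takes a genuinely different route from the paper. The paper's proof is a one-liner: $\mathfrak{N}_{S_3}$ is planar, so the isomorphism makes $\mathfrak{N}_G$ planar, and Theorem \ref{z2} then forces $G\cong S_3$. You instead extract two invariants from the isomorphism --- the vertex count $|G\setminus nil(G)|=5$ and the clique number $\omega(\mathfrak{N}_G)=\omega(\mathcal{N}_G)=4$ (your description of $\mathfrak{N}_{S_3}$ as $K_5$ minus one edge is right) --- and feed the clique number into the Section~4 theorem characterizing $\omega(\mathcal{N}_G)=4$ by $G/Z^*(G)\cong S_3$; then $nil(G)=Z^*(G)$ from Proposition \ref{l0} and the count $5=|G\setminus Z^*(G)|=5\,|Z^*(G)|$ give $|Z^*(G)|=1$, hence $G\cong S_3$. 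The paper's route buys brevity and stays inside Section~6; yours avoids the planarity classification entirely, at the price of invoking the imported (unproved here) $\omega=4$ theorem, which, like Theorem \ref{z2}, is stated only for finite groups.

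One caveat: your finiteness digression does not actually prove $G$ is finite. Showing that $K=\bigcap_{x\notin nil(G)}C_G(x)$ has index at most $120$ and $K\subseteq nil(G)$ (both steps are fine) only shows that $nil(G)$ has finite index; it does not rule out an infinite $nil(G)$. The argument can be completed: for a vertex $x$ and $k\in K$, if $kx\in nil(G)$ then, choosing $y$ adjacent to $x$, the group $\langle k,kx,y\rangle$ is nilpotent (it is generated by the nilpotent subgroup $\langle kx,y\rangle$ together with the central element $k$), contradicting the non-nilpotency of $\langle x,y\rangle\leq\langle k,kx,y\rangle$; hence the whole coset $Kx$ lies in the $5$-element set $G\setminus nil(G)$, so $|K|\leq 5$ and $G$ is finite. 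As written, though, you fall back on treating finiteness as implicit --- which is exactly the same implicit assumption the paper's own proof makes when it applies Theorem \ref{z2}, so this does not put you behind the paper's standard of rigor.
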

\begin{proof}
It follows from Theorem \ref{z2}, that the only non-nilpotent
group whose non-nilpotent graph is planer, $S_{3}$. Hence $G\cong
S_{3}$.
\end{proof}

\section{\textbf{Groups whose non-nilpotent graphs are regular}}
 In this section we give a characterization of finite nilpotent groups in terms of non-nilpotent graphs.  In this section we prove that
 \begin{thm}\label{ttt}
A finite group $G$  is  nilpotent  if and only if the set of
vertex degrees of $\mathcal{N}_G$ has at most two elements.
\end{thm}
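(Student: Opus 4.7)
The forward direction is immediate: if $G$ is nilpotent, every two-generated subgroup of $G$ is nilpotent, so $\mathcal{N}_G$ has no edges, every vertex has degree $0$, and the set of vertex degrees is a singleton.

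For the converse I would argue by contrapositive, assuming $G$ is a finite non-nilpotent group and producing at least three distinct vertex degrees. By Proposition~\ref{l0}(2) one has $nil(G)=Z^{*}(G)$, so elements of $Z^{*}(G)$ are isolated in $\mathcal{N}_G$ (degree $0$) and elements outside have positive degree; since $G$ is non-nilpotent both kinds occur, and the task reduces to producing $x,y\in G\setminus Z^{*}(G)$ with $|nil_G(x)|\neq|nil_G(y)|$. Because $Z^{*}(G)$ is the terminal upper central subgroup of the finite group $G$, the quotient $G/Z^{*}(G)$ is nontrivial and has trivial centre, hence is not a $p$-group; therefore $|G/Z^{*}(G)|$ is divisible by at least two distinct primes $p$ and $q$. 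Applying Cauchy's theorem inside $G/Z^{*}(G)$ and taking suitable prime-power powers of lifts yields $x\in G\setminus Z^{*}(G)$ of $p$-power order and $y\in G\setminus Z^{*}(G)$ of $q$-power order, and Lemma~\ref{l9} then gives $p\mid|nil_G(x)|$ and $q\mid|nil_G(y)|$.

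The crucial step is to certify $|nil_G(x)|\neq|nil_G(y)|$. Here I would decompose every $z\in nil_G(x)$ canonically as $z=z_p z_{p'}$ with commuting $p$- and $p'$-parts; nilpotence of $\langle x,z\rangle$ and coprimality of $|x|$ and $|z_{p'}|$ force $z_{p'}\in C_G(x)$, while $\langle x,z_p\rangle$ must lie in some common Sylow $p$-subgroup of $G$. This yields a counting formula expressing $|nil_G(x)|$ as a sum over $p$-elements $z_p$ generating a $p$-subgroup with $x$, weighted by $p'$-parts of the centralizers $C_G(\langle x,z_p\rangle)$, and an analogous formula holds for $y$ with $p$ and $q$ interchanged. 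Combined with Lemma~\ref{l9}, $|nil_G(x)|=|nil_G(y)|$ should force either the Sylow $p$- or Sylow $q$-structure of $G$ to collapse into $Z^{*}(G)$, contradicting non-nilpotence. I expect this final arithmetic/structural reconciliation to be the main obstacle: a case split seems necessary, according to whether the Sylow subgroups in question are abelian (in which case Lemma~\ref{ab-nil} simplifies $nil_G(x)$ to $C_G(x)$ and the counting becomes much cleaner), normal in $G$, or meet $Z^{*}(G)$ nontrivially.
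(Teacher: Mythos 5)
Your forward direction and the initial reductions are fine: degrees in $\mathcal{N}_G$ are $|G|-|nil_G(x)|$, vertices of $nil(G)$ are exactly the isolated ones, so the converse amounts to showing that $\mathfrak{N}_G$ cannot be regular for a finite non-nilpotent $G$; and your observation that $G/Z^{*}(G)$ has trivial centre, hence order divisible by two primes, is correct, as is the decomposition $z=z_pz_{p'}$ with $z_{p'}\in C_G(\langle x,z_p\rangle)$ for $z\in nil_G(x)$ when $x$ is a $p$-element. But the proof stops exactly where the theorem actually lives. The step you label ``crucial'' --- that $|nil_G(x)|=|nil_G(y)|$ for one $p$-element $x$ and one $q$-element $y$ ``should force'' the Sylow structure to collapse into $Z^{*}(G)$ --- is asserted, not proved, and there is no reason to expect a comparison of just two nilpotentizer sizes to suffice: the hypothesis you are entitled to use (and must use) is equality of $|nil_G(z)|$ for \emph{all} non-isolated vertices simultaneously, and your proposed case split (abelian Sylow, normal Sylow, Sylow meeting $Z^{*}(G)$) is neither exhaustive nor carried out.

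For comparison, the paper's argument shows how much is really needed. Assuming $\mathfrak{N}_G$ regular (after reducing to $nil(G)=1$ via Lemma \ref{ll1}), it first exploits regularity across many elements to prove Lemma \ref{hall} (the nilpotentizer of any element of non-prime-power order is a nilpotent Hall subgroup) and then Lemma \ref{CP}, i.e.\ every element of $G$ has prime power order, using Lemma \ref{l9}. It then invokes Higman's theorem to rule out $G$ solvable or having a normal Sylow subgroup (Lemma \ref{sol-nor}), and finally appeals to the classification of finite groups all of whose elements have prime power order (Bannuscher--Tiedt, Delgado--Wu) to extract a cyclic Sylow subgroup, at which point Lemmas \ref{Cpp} and \ref{l9} give the contradiction. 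In other words, the ``final arithmetic/structural reconciliation'' you defer is not a routine counting argument at two chosen elements; in the only known proof it passes through the CP-group classification. As it stands your proposal has a genuine gap at the heart of the converse.
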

  Note that the elements in $nil(G)$ has degree $0$ (isolated vertices) in $\mathcal{N}_G$; so what we want to prove in this section is equivalent to this statement: There is no finite non-nilpotent group $G$ such that $\mathfrak{N}_G$ is a regular graph.\\

 Throughout  this section, suppose,  for a contradiction, that  $G$ is  a finite non-nilpotent group such that $\mathfrak{N}_G$ is
  regular.  Thus  $|nil_G(x)|=|nil_G(y)|$
 for all $x, y \in G \setminus nil(G)$. Note that, by Lemma \ref{ll1},
 the non-nilpotent
 graph of $H=\frac{G}{Z^{*}(G)}$ is also regular and
 $Z^{*}(H)=nil(H)=1$. Therefore we may assume that $nil(G)=1$ and $|nil_G(x)|=|nil_G(y)|$
  for any two nontrivial elements $x, y$ of $G$.
   \begin{lem}\label{l8}
 For all nontrivial elements $a \in G$, we have
  $nil_G(a)=nil_G(a^{i})$ for all $1\leq i\leq |a|-1$
 In particular, $\langle a, x \rangle$ is not  nilpotent if and only if
 $\langle a^{i}, x \rangle$ is   not nilpotent for some $i\in\{1,\dots,|a|-1\}$.
  \begin{proof}
It is clear that $nil_G(a)\subseteq nil_G(a^{i})$. Thus $nil_G(a)= nil_G(a^{i})$ since $|nil_G(a)|=|nil_G(a^{i})|$ for all  $i\in\{1,\dots,|a|-1\}$.
 \end{proof}
\end{lem}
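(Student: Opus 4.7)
The plan is to exploit the standing assumption of this section, namely that $G$ is a finite non-nilpotent group with $nil(G)=1$ for which the induced graph $\mathfrak{N}_G$ is regular, so that $|nil_G(x)|$ takes a single common value across all nontrivial $x\in G$. The lemma then reduces to combining a trivial containment with the cardinality constraint.

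First I would establish the one-sided inclusion $nil_G(a)\subseteq nil_G(a^i)$ for every $i\in\{1,\dots,|a|-1\}$. This is immediate from the fact that subgroups of nilpotent groups are nilpotent: if $g\in nil_G(a)$, then $\langle a,g\rangle$ is nilpotent, and since $\langle a^i,g\rangle\leq \langle a,g\rangle$, the subgroup $\langle a^i,g\rangle$ is nilpotent as well, i.e.\ $g\in nil_G(a^i)$. No hypothesis of the section is needed for this step.

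Next I would invoke the reduction to $nil(G)=1$ together with regularity. Since $1\leq i\leq |a|-1$, the element $a^i$ is nontrivial, so both $a$ and $a^i$ are vertices of $\mathfrak{N}_G$. Regularity of $\mathfrak{N}_G$ (together with $nil(G)=1$, which ensures all nontrivial vertices sit in the regular graph) then gives the equality of degrees, and hence $|nil_G(a)|=|nil_G(a^i)|$. Combined with the containment of the previous step, this forces $nil_G(a)=nil_G(a^i)$. The ``in particular'' statement is then just the complement of this equality in $G$.

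There really is no main obstacle here; the only point requiring a moment of care is noting that $a^i\neq 1$ throughout the range so that the regularity hypothesis applies to $a^i$ as a genuine vertex of $\mathfrak{N}_G$, and that the reduction to $nil(G)=1$ performed at the start of the section is what licenses the comparison of $|nil_G(a)|$ with $|nil_G(a^i)|$.
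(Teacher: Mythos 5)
Your proof is correct and follows the paper's argument exactly: the trivial inclusion $nil_G(a)\subseteq nil_G(a^i)$ (since $\langle a^i,x\rangle\leq\langle a,x\rangle$) combined with the section's standing regularity assumption, which forces $|nil_G(a)|=|nil_G(a^i)|$ for the nontrivial element $a^i$ and hence equality of the sets. The extra care you take in noting $a^i\neq 1$ and that the reduction to $nil(G)=1$ is what makes the cardinality comparison legitimate is exactly the implicit content of the paper's two-line proof.
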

 \begin{lem}\label{l5}
 For all nontrivial elements $a, b \in G$ such that
 $ab=ba$ we have the following:
 \begin{enumerate}
   \item If $|a|<|b|$ then  $nil_G(ab)= nil_G(b)$.
  \item If $\gcd( |a|,|b|)=1$, then
  $nil_G(a)=nil_G(b)=nil_G(ab)$.
  \end{enumerate}
   \begin{proof}
 (1) \;  Suppose that $c\in nil_G(ab)$, it follows that $\langle c,
ab\rangle$ is a nilpotent subgroup, so  $\langle c, (ab)^{\mid a
\mid}\rangle=\langle c, b^{\mid a \mid}\rangle$ is a nilpotent
subgroup and so by Lemma \ref{l8}, $\langle c, b\rangle$ is a
nilpotent subgroup, that is  $c\in nil_G(b)$.
Thus $nil_G(ab)\subseteq nil_G(b)$ and so $nil_G(ab)= nil_G(b)$, since  $|nil_G(ab)|=|nil_G(b)|$. \\
(2)\; It is enough to $nil_G(ab)\subseteq nil(a)\cap nil_G(b)$.
Let $c\in nil_G(ab)$, so $\langle c, ab\rangle$ is a nilpotent
subgroup. It follows that $\langle c, (ab)^{\mid a\mid}\rangle
=\langle c, b^{\mid a\mid}\rangle$ and $\langle c,(ab)^{\mid
b\mid}\rangle = \langle c, a^{\mid b\mid}\rangle$ are nilpotent
subgroups and so two subgroups $\langle c, b\rangle$ and $\langle
c, a\rangle$ are nilpotent, since $nil_G(b)=nil_G(b^{\mid a
\mid})$ and $nil_G(a)=nil_G(a^{\mid b \mid})$. Thus $c\in
nil_G(a)\cap nil_G(b)$, namely $nil_G(ab)\subseteq nil_G(a)\cap
nil_G(b)$. Therefore by Case $(1)$ and regularity,
 $nil_G(ab)= nil_G(a)= nil_G(b)$.
  \end{proof}
\end{lem}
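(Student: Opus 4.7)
The plan is to exploit the key fact established in Lemma \ref{l8}, namely that for any nontrivial $c\in G$ one has $nil_G(c)=nil_G(c^i)$ for every $1\le i\le |c|-1$, together with the fact that $ab=ba$ allows us to compute powers of $ab$ as products of powers of $a$ and $b$. The standing regularity hypothesis (every nontrivial element has a nilpotentizer of the same cardinality) then upgrades an inclusion to an equality whenever both sides are nilpotentizers of nontrivial elements.

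For part (1), I would take an arbitrary $c\in nil_G(ab)$ and show $c\in nil_G(b)$. Since $\langle c,ab\rangle$ is nilpotent, so is its subgroup $\langle c,(ab)^{|a|}\rangle$; using $ab=ba$ and $a^{|a|}=1$ this equals $\langle c,b^{|a|}\rangle$. Because $1\le |a|<|b|$, the exponent $|a|$ lies in the range where Lemma \ref{l8} applies to $b$, giving $nil_G(b^{|a|})=nil_G(b)$, and in particular $c\in nil_G(b)$. This yields $nil_G(ab)\subseteq nil_G(b)$, and since both sets have the same cardinality by the regularity of $\mathfrak{N}_G$, we conclude $nil_G(ab)=nil_G(b)$.

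Part (2) is the same argument run twice. For $c\in nil_G(ab)$ the subgroups $\langle c,(ab)^{|a|}\rangle=\langle c,b^{|a|}\rangle$ and $\langle c,(ab)^{|b|}\rangle=\langle c,a^{|b|}\rangle$ are both nilpotent by the previous computation. Coprimality of $|a|$ and $|b|$ guarantees that $b^{|a|}$ and $a^{|b|}$ are nontrivial (indeed they generate $\langle b\rangle$ and $\langle a\rangle$ respectively), so Lemma \ref{l8} gives $c\in nil_G(a)\cap nil_G(b)$. Thus $nil_G(ab)\subseteq nil_G(a)\cap nil_G(b)$, and regularity converts this into the claimed chain of equalities $nil_G(ab)=nil_G(a)=nil_G(b)$.

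I do not anticipate a serious obstacle. The only place one has to be a bit careful is verifying that the exponents fed to Lemma \ref{l8} fall in the admissible range $\{1,\dots,|\cdot|-1\}$: in (1) this is direct from $|a|<|b|$, and in (2) it follows from $\gcd(|a|,|b|)=1$, which forces $b^{|a|}$ and $a^{|b|}$ to remain nontrivial. Without the standing regularity assumption one would only obtain the inclusions, not the equalities, so the whole scheme depends essentially on the reduction made at the beginning of the section to the case $nil(G)=1$ with $\mathfrak{N}_G$ regular.
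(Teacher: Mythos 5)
Your proposal is correct and follows essentially the same route as the paper: pass to the powers $(ab)^{|a|}=b^{|a|}$ (and $(ab)^{|b|}=a^{|b|}$ in part (2)), invoke Lemma \ref{l8} to return from the power to the element itself, and use the regularity hypothesis $|nil_G(ab)|=|nil_G(b)|$ to turn the inclusion into an equality. Your explicit check that the exponents are admissible for Lemma \ref{l8} is a nice touch but does not change the argument.
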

\begin{lem}\label{hall}
If $a$ is an element of $G$ which is not of prime power order, then $nil_G(a)$ is a nilpotent Hall subgroup of $G$.
\end{lem}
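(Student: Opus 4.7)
The plan is to exploit the coprime-order decomposition of $a$ and use Lemma~\ref{l5}(2) to equate $nil_G(a)$ with the nilpotentizer of each prime-power part of $a$, and then to piece these local structures together. Since $a$ is not of prime-power order, write $a=a_1 a_2\cdots a_k$ with $k\ge 2$, where each $a_i$ is a (power of $a$ and hence) prime-power-order element of order $p_i^{e_i}$ for distinct primes $p_1,\dots,p_k$, and the $a_i$ pairwise commute. Inductively applying Lemma~\ref{l5}(2) to the commuting coprime-order pairs yields
$$nil_G(a)=nil_G(a_1)=nil_G(a_2)=\cdots=nil_G(a_k).$$

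The first major step is to show $nil_G(a)$ is a subgroup. Take $x,y\in nil_G(a)$; then for every $i$ the group $\langle a_i,x\rangle$ is nilpotent, so the $p_i'$-part $x_{p_i'}$ (which is a power of $x$, hence also in $nil_G(a)$ by Lemma~\ref{ll1}(5)) commutes with $a_i$, and similarly for $y$. Running this across all $i$ gives rich commutation data: the component of $x$ outside the $p_i$-primary part centralizes $a_i$ for every $i$. I plan to combine these local centralizations with a second application of Lemma~\ref{l5}(2), applied to the coprime-order pair $a$ and the $\pi(a)'$-part of $xy$, to conclude that $\langle a,xy\rangle$ is nilpotent, i.e.\ $xy\in nil_G(a)$. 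Closure under inverses is immediate from Lemma~\ref{ll1}(5).

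Once $nil_G(a)$ is a subgroup, nilpotency follows from the same component analysis: any two elements of coprime orders in $nil_G(a)$ centralize one another through the $a_i$, so $nil_G(a)$ is the internal direct product of its Sylow subgroups. For the Hall property, Lemma~\ref{l9} applied to each $a_i$ gives $|a_i|\mid|nil_G(a_i)|=|nil_G(a)|$, so $\pi(a)\subseteq\pi(nil_G(a))$. I plan to use the regularity hypothesis in force throughout Section~7 to rule out strict containment: a $q$-element in $nil_G(a)$ with $q\notin\pi(a)$ would, via Lemma~\ref{l5}(2), produce an element of order $|a|q$ with the same nilpotentizer, leading by a counting argument (via the constant value of $|nil_G(\cdot)|$) to an inconsistency. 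A parallel argument shows every Sylow $p$-subgroup of $nil_G(a)$, for $p\in\pi(a)$, is a full Sylow $p$-subgroup of $G$, which combined with the direct-product structure gives the Hall condition $\gcd(|nil_G(a)|,[G:nil_G(a)])=1$.

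The main obstacle will be the closure under multiplication. While each $\langle a_i,x\rangle$ is nilpotent, the element $xy$ does not decompose as $x_{p_i}y_{p_i}\cdot x_{p_i'}y_{p_i'}$ (since $x$ and $y$ need not commute), so transferring the per-$i$ centralization data to a single nilpotency statement for $\langle a,xy\rangle$ requires the careful re-indexing and simultaneous use of all $k$ equalities $nil_G(a)=nil_G(a_i)$, with Lemma~\ref{l5}(2) invoked one last time to absorb the coprime factor.
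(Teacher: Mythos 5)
Your proposal has a genuine gap precisely at the step you yourself flag as ``the main obstacle'': closure of $nil_G(a)$ under multiplication is never proved, and the plan you sketch for it does not work as stated. Lemma~\ref{l5}(2) applies only to a pair of \emph{commuting} elements of coprime orders, and there is no a priori reason that the $\pi(a)'$-part of $xy$ commutes with $a$ --- that would essentially presuppose $xy\in nil_G(a)$, which is what is to be proved --- while the $\pi(a)$-part of $xy$ is not addressed at all; moreover, as you note, the primary parts of $xy$ bear no useful relation to those of $x$ and $y$ since $x$ and $y$ need not commute. The idea that unlocks the paper's proof, and which is absent from your sketch, is to upgrade your commutation observation to an equality of nilpotentizers and then to avoid general products altogether: fix two distinct primes $p,q\in\pi(a)$ and powers $a^i,a^j$ of $a$ of orders $p$ and $q$; every nontrivial $p'$-element $y$ of $nil_G(a)$ commutes with $a^i$, so Lemma~\ref{l5}(2) gives $nil_G(y)=nil_G(a^i)=nil_G(a)$. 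Hence the set $A$ of $p'$-elements of $nil_G(a)$ is closed under products (a product of two such elements lies in the nilpotentizer of one of them, namely $nil_G(a)$, and is again a $p'$-element because it sits inside a nilpotent group generated by $p'$-elements), and likewise the set $B$ of $p$-elements, using $a^j$; then $[A,B]=1$ and $nil_G(a)=AB$ is a nilpotent subgroup. Your shortcut for nilpotency (``coprime-order elements centralize one another, hence direct product of Sylows'') also needs this stronger equality of nilpotentizers to be made precise.

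The Hall part of your plan is likewise off target. Being a Hall subgroup is not the assertion $\pi(nil_G(a))=\pi(a)$; it is the assertion that for every prime $r$ dividing $|nil_G(a)|$ a full Sylow $r$-subgroup of $G$ lies in $nil_G(a)$. Your proposed counting contradiction from a $q$-element with $q\notin\pi(a)$ is unsubstantiated (no inconsistency is apparent) and in any case irrelevant: even if $\pi(nil_G(a))=\pi(a)$ held, it would say nothing about Sylow orders. The argument that actually works, again via equality of nilpotentizers, is: given $r$ dividing $|nil_G(a)|$, choose without loss of generality $r\neq p$ (possible since $p\neq q$), take a nontrivial $r$-element $x\in nil_G(a)$, so that $nil_G(x)=nil_G(a)$; any Sylow $r$-subgroup $Q$ of $G$ containing $x$ is an $r$-group, so $\langle x,g\rangle$ is nilpotent for every $g\in Q$, whence $Q\leq nil_G(x)=nil_G(a)$. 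This is what your ``parallel argument'' would have to be, but nothing in your proposal produces it.
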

\begin{proof}
By hypothesis, there are integers $i$ and $j$ such that $|a^i|=p$ and $|a^j|=q$ are distinct prime numbers.
 By Lemma \ref{l8}, $nil_G(a^i)=nil_G(a^j)=nil_G(a)$. Let $y_1$ and $y_2$ be two nontrivial $p'$-elements of $nil_G(a^i)$.
  Thus $[y_1,a^i]=[y_2,a^i]=1$ and so $nil_G(a^i)=nil_G(y_1)=nilG(y_2)$. Therefore $\langle y_1y_2,y_2\rangle=\langle y_1,y_2\rangle$ is
  nilpotent. It follows that $y_1y_2\in nil_G(a)$ and so $$A=\{~y\in nil_G(a)~|~ y ~ \text{is an}~
p'\text{-element} \},$$ is a subgroup and as every pair of elements of $A$  generates a nilpotent subgroup, $A$ is a nilpotent $p'$-group.
Now let $z_1,z_2$ be two nontrivial $p$-elements of $nil_G(a^j)$. Then by a similar argument $nil_G(a)=nil_G(z_1)=nil_G(z_2)$ and so
$$B=\{~y\in nil_G(a)|~~\text{y is an}~ p-\text{element}\}$$ is $p$-subgroup. Also we have that $nil_G(a)=nil_G(x)=nil_G(y)$ for any two nontrivial elements $x\in A$ and $y\in B$. Hence $\langle x,y\rangle$ is nilpotent and so $[A,B]=1$. It is clear that $nil_G(a)\subseteq AB$ and so it follows that $nil_G(a)=AB$ is a nilpotent group. \\ It remains to prove $nil_G(a)$ is a Hall subgroup of $G$. Let $r$ be a prime number dividing $|nil_G(a)|$.
Then either $r\not=p$ or $r\not=q$. Assume without loss of generality that $r\not=p$ and let $x$ be a nontrivial $r$-element of $nil_G(a)$. Note that
$nil_G(x)=nil_G(a)$, by the first part of the proof. Now if $Q$ is any Sylow $r$-subgroup of $G$ containing $x$, then $Q\leq nil_G(x)$. It follows that $nil_G(a)$ is a Hall subgroup of $G$.
\end{proof}
\begin{lem}\label{CP}
Every element of $G$ has prime power order.
\end{lem}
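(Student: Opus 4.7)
The plan is to argue by contradiction. Suppose some nontrivial $a\in G$ has order that is not a prime power. By Lemma \ref{hall}, the set $H := nil_G(a)$ is a nilpotent Hall subgroup of $G$. I will derive a contradiction by examining the prime divisors of $|H|$ relative to those of $|G|$.

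The first key observation is that the nilpotentizer of every nontrivial element of $H$ is exactly $H$. Indeed, since $H$ is nilpotent, $\langle x,y\rangle\leq H$ is nilpotent for all $x,y\in H$, so $H\subseteq nil_G(x)$ for every $x\in H\setminus\{1\}$. By the regularity assumption, $|nil_G(x)|=n=|H|$, hence $nil_G(x)=H$. In particular, $H$ ``swallows'' all of its own nontrivial elements' nilpotentizers.

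Now I will split on whether $\pi(H)=\pi(G)$ or not. If $\pi(H)=\pi(G)$, then every prime divisor of $|G|$ divides $|H|$; combined with the Hall condition $\gcd(|H|,[G:H])=1$, this forces $[G:H]=1$, i.e. $H=G$. But then $G$ is nilpotent, contradicting our standing assumption that $G$ is non-nilpotent. Therefore $\pi(H)\subsetneq\pi(G)$; pick $r\in\pi(G)\setminus\pi(H)$ and take any nontrivial $r$-element $y\in G$. By Lemma \ref{l9} the order $|y|=r^k$ divides $|nil_G(y)|=n=|H|$, so $r$ divides $|H|$, contradicting $r\notin\pi(H)$.

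Both cases lead to contradictions, so no element of $G$ can fail to have prime power order. The main obstacle was really the previous lemma (that $nil_G(a)$ is a nilpotent Hall subgroup when $a$ has non-prime-power order); granted that, the argument here is just a clean exploitation of the Hall property together with the divisibility fact $|a|\mid|nil_G(a)|$ of Lemma \ref{l9} and the regularity of $\mathfrak{N}_G$.
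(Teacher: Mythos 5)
Your proof is correct and follows essentially the same route as the paper: invoke Lemma \ref{hall} to get that $nil_G(a)$ is a nilpotent Hall subgroup, use the Hall property together with non-nilpotency of $G$ to produce a prime $r$ dividing $|G|$ but not $|nil_G(a)|$, and then contradict this via Lemma \ref{l9} and regularity applied to an $r$-element. You spell out the existence of such a prime more explicitly than the paper does (a welcome clarification), while your opening observation that $nil_G(x)=H$ for all nontrivial $x\in H$ is never actually used and could be dropped.
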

\begin{proof}
Suppose, for a contradiction, that there is an element of $G$ which is not of prime power order. Then by Lemma \ref{hall}, $nil_G(a)$ is a nilpotent Hall subgroup of $G$. Since $G$ is assumed to be non-nilpotent, there is a prime number $p$  dividing $|G|$ and $p \nmid |nil_G(a)|$. Let $x$ be nontrivial $p$-element of $G$. Then by Lemma \ref{l9}, $p$ divides $|nil_G(x)|=|nil_G(a)|$, a contradiction. This completes the proof.
\end{proof}
 \begin{lem}\label{Cpp}
 Let $x$ be a nontrivial $p$-element of
$G$ for some prime $p$. Then $$nil_G(x)=\bigcup\{P \;|\; P \;\textrm{is a Sylow $p$-subgroup containing}\;\; x\}.$$ In particular, if a Sylow $p$-subgroup of $G$ is abelian, then $nil_G(x)=C_G(x)$ and so $nil_G(x)$ is a $p$-subgroup.
\end{lem}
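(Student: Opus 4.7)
The plan is to prove the two inclusions of the set equality and then deduce the ``in particular'' clause from it.

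First I would dispatch the easy inclusion $\supseteq$: if $P$ is any Sylow $p$-subgroup containing $x$ and $y \in P$, then $\langle x,y\rangle \le P$ is a $p$-group, hence nilpotent, so $y \in nil_G(x)$.

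For the reverse inclusion $\subseteq$, pick $y \in nil_G(x)$; we may assume $y\ne 1$. By Lemma \ref{CP}, every element of $G$ has prime-power order, so $y$ is a $q$-element for some prime $q$. Because $\langle x,y\rangle$ is a finite nilpotent group, it decomposes as the direct product of its Sylow subgroups, with $x$ lying in the Sylow $p$-part and $y$ in the Sylow $q$-part. The key step is to rule out $q\ne p$: in that case $x$ and $y$ would commute and have coprime orders, forcing $|xy|=|x||y|$ to be divisible by the two distinct primes $p$ and $q$, contradicting Lemma \ref{CP}. Therefore $q=p$, and $\langle x,y\rangle$ is a $p$-subgroup of $G$; extending it to a Sylow $p$-subgroup $P$ yields a Sylow $p$-subgroup containing both $x$ and $y$, as required.

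For the ``in particular'' part, I would invoke Lemma \ref{ab-nil}, which directly gives $nil_G(x)=C_G(x)$ whenever the Sylow $p$-subgroups are abelian. Combined with the formula just established, $C_G(x)$ equals the union of the abelian Sylow $p$-subgroups containing $x$; each such Sylow subgroup is contained in $C_G(x)$, and conversely every element of $C_G(x)$ lies in one of them, hence is a $p$-element. A subgroup consisting entirely of $p$-elements is a $p$-subgroup, which finishes the proof.

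The only place where something nontrivial happens is the mixed-prime case in the $\subseteq$ inclusion, and that obstacle is removed cleanly by the prime-power-order conclusion of Lemma \ref{CP}; without that reduction, a commuting pair consisting of a $p$-element $x$ and a $q$-element $y$ would give $y\in nil_G(x)$ without $y$ belonging to any Sylow $p$-subgroup containing $x$, so the description would fail outright.
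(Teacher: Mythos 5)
Your proof is correct and follows essentially the same route as the paper's: the nontrivial inclusion rests on Lemma \ref{CP} forcing the nilpotent group $\langle x,y\rangle$ to be a $p$-group (you merely spell out the commuting coprime-order element $xy$ that the paper leaves implicit), after which the Sylow containment and the abelian-Sylow consequence are immediate. The only cosmetic difference is that you cite Lemma \ref{ab-nil} to get $nil_G(x)=C_G(x)$, whereas the paper reads it off directly from the union formula; both are fine.
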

\begin{proof}
 Suppose that  $y \in
nil_G(x)$. Thus $\langle x, y\rangle$ is nilpotent and so it follows from Lemma \ref{CP} that
$\langle x, y\rangle$ is a $p$-subgroup. Hence $\langle x, y\rangle$ is contained in a Sylow $p$-subgroup of $G$.
 This completes the proof.
\end{proof}
\begin{lem}\label{sol-nor}
$G$ is neither  solvable nor having a normal Sylow $p$-subgroup for some prime $p$ dividing $|G|$.
\end{lem}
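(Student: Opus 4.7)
The plan is to treat the two assertions separately, first ruling out a normal Sylow subgroup and then ruling out solvability.

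For the first assertion I would argue by contradiction: assume $P$ is a normal Sylow $p$-subgroup of $G$. Then $P$ is the unique Sylow $p$-subgroup, so Lemma \ref{Cpp} forces $nil_G(x)=P$ for every nontrivial $p$-element $x$. The standing regularity hypothesis (i.e.\ $|nil_G(x)|=|nil_G(y)|$ for all nontrivial $x,y\in G$) then yields $|nil_G(y)|=|P|$ for every nontrivial $y\in G$. Since $G$ is non-nilpotent it is not a $p$-group, so some prime $q\neq p$ divides $|G|$. Picking a nontrivial $q$-element $y$ and applying Lemma \ref{l9}, one gets $q\mid|P|$, which is absurd because $|P|$ is a power of $p$.

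For the second assertion, the key ingredient is the classical theorem of G.~Higman on finite groups in which every element has prime-power order: such a group, if solvable, is either a $p$-group or possesses a normal Sylow subgroup. Since every element of $G$ has prime power order by Lemma \ref{CP}, a solvable $G$ would fall into one of these two cases; non-nilpotence rules out the $p$-group alternative, and the previous paragraph rules out a normal Sylow subgroup. Hence $G$ cannot be solvable.

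The main obstacle is the solvability step, which rests on citing Higman's structure theorem for solvable CP-groups. The rest is a short deduction: regularity together with Lemma \ref{Cpp} pins down $|nil_G(x)|$ as a $p$-power, and then Lemma \ref{l9} produces the required divisibility contradiction at any other prime divisor of $|G|$.
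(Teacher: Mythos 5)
Your first paragraph (ruling out a normal Sylow $p$-subgroup) is correct and is essentially the paper's argument for that alternative: uniqueness of the normal Sylow $p$-subgroup plus Lemma \ref{Cpp} gives $nil_G(x)=P$ for every nontrivial $p$-element, and regularity together with Lemma \ref{l9} applied to a $q$-element yields the contradiction. The gap is in the solvability step: the statement you attribute to Higman is false. A solvable group in which every element has prime power order need not be a $p$-group or possess a normal Sylow subgroup; $S_4$ is a counterexample (all of its elements have order $1,2,3$ or $4$, yet neither its Sylow $2$-subgroups nor its Sylow $3$-subgroups are normal). What Higman's Theorem 1 actually yields is the weaker alternative used in the paper: a solvable group with all elements of prime power order is a $p$-group, or it has a normal Sylow subgroup (Frobenius case), or it is of $2$-Frobenius type, in which case one only gets a \emph{cyclic} (hence abelian), not normal, Sylow $q$-subgroup for some prime $q$. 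Your case analysis omits exactly this last possibility, and nothing in your argument excludes it.

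The paper closes that case with the second half of Lemma \ref{Cpp}: since every element of $G$ has prime power order (Lemma \ref{CP}) and the relevant Sylow $q$-subgroup is abelian, $nil_G(x)=C_G(x)$ is a $q$-group for any nontrivial $q$-element $x$; then, because $G$ is non-nilpotent, $|G|$ has a prime divisor $r\neq q$, and regularity plus Lemma \ref{l9} applied to an element of order $r$ gives the same divisibility contradiction as in your first paragraph. Adding this case (or citing Higman correctly in the "cyclic Sylow or normal Sylow" form) repairs your proof; as written, it does not rule out solvable groups of $S_4$-type.
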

\begin{proof}
Suppose, for a contradiction, that $G$ is either solvable or having a normal Sylow $p$-subgroup for some prime $p$ dividing $|G|$. It follows from Lemma \ref{CP} and \cite[Theorem 1]{Hig} that $G$  contains either a cyclic Sylow $q$-subgroup  for some prime  $q$ dividing $|G|$ or a normal Sylow $p$-subgroup  for some prime $p$ dividing $|G|$. Thus, by Lemma \ref{Cpp}, in any case there exists an element $x\in G$ such that $nil_G(x)$ is (a subgroup) of prime power order. Since $G$ is not nilpotent, $|G|$ is dividing by at least two primes. Now Lemma \ref{l9} gives a contradiction.
\end{proof}
\noindent{\bf Completion of the Proof of Theorem \ref{ttt}.}
By Lemma \ref{sol-nor}, $G$ is neither solvable nor having a normal Sylow $p$-subgroup for some prime $p$ dividing $|G|$. Now using the classification of finite groups in which every element has prime power order (see \cite{wbg} or \cite[{\sc Main Theorem}]{del}), it is easy to see that $G$ has a cyclic Sylow $q$-subgroup for some prime $q$ dividing $|G|$ (one must only check the cases (4) and (5) of \cite[{\sc Main Theorem}]{del} for this; the other cases are ruled out by Lemma \ref{sol-nor}). Now Lemmas \ref{Cpp} and \ref{l9} complete the proof.\\
The converse is clear. $\hfill \Box$

\noindent{\bf Acknowledgements.}   The research of the first author was partially supported by  the Center of Excellence for Mathematics, University of Isfahan and he gratefully acknowledges the financial support of University of Isfahan for the sabbatical leave studies in University of Bath, UK and ICTP, Trieste, Italy.

\end{document}